\definecolor{RedClr}{rgb}{1,0,0}
\definecolor{BlueClr}{rgb}{0,0,1}
\definecolor{TextColor}{rgb}{0,0,0.5}
\definecolor{Violet}{rgb}{0.5,0,1}
\definecolor{Bordeaux}{rgb}{1,0.3,0.4}
\newtheorem{thm}{Theorem}[section]
\newtheorem{cor}[thm]{Corollary}
\newtheorem{lem}[thm]{Lemma}
\newtheorem{prop}[thm]{Proposition}
\theoremstyle{definition}
\newtheorem{defn}[thm]{Definition}
\theoremstyle{remark}
\newtheorem{rem}[thm]{Remark}
\numberwithin{equation}{section} \theoremstyle{quest}
\numberwithin{equation}{section} \theoremstyle{prob}
\numberwithin{equation}{section} \theoremstyle{answer}
\numberwithin{equation}{section}
\numberwithin{equation}{section}
\newtheorem{exmps}[thm]{Examples}
\begin{document}

\title[Geometric Sampling of Networks]{Geometric Sampling of Networks}

\author{Vladislav Barkanass, J\"urgen Jost and Emil Saucan$^*$ }
\address{ORT Braude College of Technology, Karmiel 2161002, Israel, 
	\and J\"{u}rgen Jost, Max Planck Institute for Mathematics in the Sciences, Leipzig 04103, Germany,
	\and Emil Saucan, ORT Braude College of Technology, Karmiel 2161002, Israel}  
	\email{vladigr1@gmail.com,  jost@mis.mpg.de, semil@braude.ac.il \newline $^*$ Corresponding author: semil@braude.ac.il}

\thanks{J. Jost and E. Saucan were partly supported by the German-Israeli Foundation Grant 
	I-1514-304.6/2019. }

\maketitle
%


\begin{abstract}
Motivated by the  methods and results of manifold sampling based on Ricci curvature, we propose a similar approach for networks. To this end we make appeal to three types of discrete curvature, namely the graph Forman-, full Forman- and Haantjes-Ricci curvatures for edge-based and node-based sampling. We present the results of experiments on real life networks, as well as for square grids arising in Image Processing. Moreover, we consider fitting Ricci flows and we employ them for the detection of networks' backbone. 
We also develop embedding kernels related to the Forman-Ricci curvatures and employ them for the detection of the coarse structure of networks, as well as for network visualization with applications to SVM. The relation between the Ricci curvature of the original manifold and that of a Ricci curvature driven discretization is also studied. 
\end{abstract}


\section{Introduction}

People study and make appeal to the coarse geometry of networks, constantly and for a long time now, whether they do it conscientiously (rarely) or not (in most cases). We shall demonstrate this shortly, however, to make this even clear, let us specify what we, informally, mean by ``Coarse Geometry": The study of the geometric (topological) properties, without ``looking at'' the small scales. In other words, one does not discern between objects that, viewed from sufficiently far away, look the same. Perhaps the simplest and immediate example of such a large scale geometry (or, in John Lott's suggestive words, ``Mr. Magoo geometry'') is given by the integer grid in the Euclidean plane. 
(For more insights and technical definitions and results, see, e.g \cite{Gr-carte}, \cite{Roe}, \cite{Kanai}, as well as Appendix 1.)

The Complex Networks community has been exposed to this approach -- and makes by now use of it -- via the notion of {\it Gromov hyperbolicity}, itself stemming from Gromov's seminal work on {\it hyperbolic groups} \cite{Gr-HypGrps}. 
The -- perhaps somewhat theoretical in the eyes of many practitioners -- notion above is connected to the notion of {\it network backbone}, underlying the long-distance relations between major network regions. Indeed, the connection between negative curvature and these ``communication highways" in networks has been emphasized in \cite{Ni2015}. We recall below its formal definition, as introduced in \cite{WSJ2}:

\begin{defn}
	\label{def:backbone}
	We denote the \emph{backbone} of a network $G=\lbrace V, E \rbrace$ as a subnetwork $G' = \lbrace V', E' \rbrace$ ($V' \subseteq V$, $E' \subseteq E$) that captures structurally important nodes (\emph{hubs}) and edges (\emph{bridges}). A node is typically termed \emph{hub} if it has a high degree and a high betweenness centrality. \emph{Bridges} denote edges that govern the mesoscale structure of $G$, for instance by forming long-range connections between communities. The backbone $G'$ is \emph{structure-preserving}, i.e., its structural features (e.g., node degree distribution, community structure) are representative of $G$.
\end{defn}

The intuition behind this definition is quite simple: Since high-degree nodes (hubs) form the centers of the major network communities, the edges that form strong connections between them bridge the corresponding communities. The motivation for the study of the network backbone stemmed from Communication Networks, more precisely from that of the Internet, where it is a quite technical and specific term: The edges belonging to the backbone represent the Internet's  ``highways'', and are therefore physically implemented by the fastest existing optical connections. Moreover, they are owned and operated by the so called {\it network backbone providers (NBPs)}, who are mainly governments and the largest  telecommunication companies (see, e.g., \cite{New}). However, the rational behind the notion of backbone is not restricted solely to the Internet, but it also arises naturally in Social Networks \cite{BK}, and beyond \cite{BGL}.

Before passing further on, let us mention here that the network backbone is also referred to the {\it core of the network} \cite{New}. It is precisely this more geometric term which we have in mind in our approach here in, where networks are viewed as 1-dimensional versions of manifolds, endowed with a coarse geometry.

Another, older, far better known, and permeating almost (if not) all of large scale applications of Computer Science, inadvertent application of coarse geometry is {\it sampling}. Indeed, by choosing certain points in a space, and discarding the ``less important'' points in their neighborhood, in conjunction (in many instances), with the related process of {\it clustering}, one in effect discards the small scale structure, as encoded in the discarded vicinities, and rather concentrates on the large scale, i.e. coarse structure. Clearly sampling the ``heavy'' curvature nodes or edges in a network, and using them as clustering centers is natural. In fact, such clustering based on the combinatorial version of Gaussian curvature, that is the {\it clustering coefficient} has been employed for clustering in networks for quite some time now \cite{Wa}, and it was only natural to extend this method to a proper notion of metric curvature that allows of the incorporation of weights, be they node or edge weights, accounting for the simultaneous presence of the two types of weights as well \cite{SA}. 
Thus we connect the two interpretations of the notion of coarse geometry, namely the {\it global} one, motivated largely by Gromov's hyperbolicity, and the {\it local} one, (as opposed to the infinitesimal one), obtained by the approximation of manifolds by networks.
We shall show, moreover, that sampling by {\it Ricci curvature}, is not just an heuristic, empirical procedure, and it has, in fact, deep mathematical justifications that also point to its full potential. We shall prove our assertion in Section 5 below.

The reminder of this paper is structured as follows: In the next section we bring the mathematical background, motivating our thrust for a Ricci curvature driven sampling of networks. Section 3 is dedicated to the introduction of three types of discrete  Ricci curvature, namely graph Forman, full Forman and Haantjes Ricci curvatures, that we deem particularly suited for the networks' sampling task as well to first experiments, on real-life networks as well as square grids that arise in Imaging. Furthermore, fitting Ricci flows are considered and applied to the networks' backbone detection. By ``fitting'', we mean Ricci flows in which the role of the Ricci curvature is taken by any of the consider discretization. 
We follow in Section 4 with an application of the Forman Ricci curvatures and associated Bochner Laplacians to the study of the coarse geometry of networks, via the development of fitting embedding kernels. An application to the visualization of kernel spaces and networks is also presented. In Section5we return to the motivating manifold sampling and show the connections between the Ricci curvature of the given manifold and the Forman curvatures of the resulting discretizations. The last section comprises a terse summary of the paper and an outlook towards the directions of further study that we deem more important. 


\section{Background: Ricci Curvature Based Sampling of Manifolds}\label{section: background}

Sampling by curvature, namely choosing the sampling points whose metric density is inverse proportional to curvature has been rediscovered and applied in Imaging and Graphics, as well as in more theoretical problems a number of times. (The relevant bibliography is far too extensive to include in the present paper, therefore we rather point the reader to the articles mentioned above and to the sources mentioned therein.) The most general method is perhaps the one in \cite{SAZ}, that also extends it to the sampling of more general signals. 

However, all the approaches mentioned above are based on {\it extrinsic} curvature, thus in practice necessitating first finding an isometric embedding in $\mathbb{R}^n$, a problem that is highly nontrivial for abstract manifolds.\footnote{This being in contrast with the Imaging/Graphics case where images/meshes are already embedded in $\mathbb{R}^3$.} Therefore, it is highly desirable to find a sampling method based on {\it intrinsic} curvature. Fortunately, it turns out that the basis for such a method exists for a long time in Geometry \cite{GP}, and it is based on {\it Ricci curvature} (see, e.g. \cite{Jo}), which is an intrinsic quantity. 

Before proceeding further, let us emphasize that the choice of Ricci curvature, instead, for instance, of {\it scalar curvature} \cite{Jo}, is not just a whim or fad, and stems from the absorption of the fundamental fact that networks are determined not by their members (nodes),  but rather by their connections (edges), and Ricci curvature is, on networks, a quantity attached to edges (as discretizations of vectors).

The basic idea resides in the constriction of so called {\it efficient packings}:

\begin{defn} \label{def:epsilon-nets}
	Let $\in M^n$ be a Riemannian manifolds and let $p_1,\ldots,p_{n_0}$ be points  $\in M^n$,  satisfying  the
	following conditions:
	\begin{enumerate}
		\item The set $\{p_1,\ldots,p_{n_0}\}$ is an $\varepsilon$-net on $M^n$, i.e. the
		balls $\beta^n(p_k,\varepsilon)$, $k=1,\ldots,n_0$ cover $M^n$;
		\item The balls (in the intrinsic metric of $M^n$) $\beta^n(p_k,\varepsilon/2)$ are pairwise
		disjoint.
	\end{enumerate}
	Then the set $\mathcal{N} = \{p_1,\ldots,p_{n_0}\}$ is called a {\it minimal
		$\varepsilon$-net} and the packing with the balls
	$\beta^n(p_k,\varepsilon/2)$, $k=1,\ldots,n_0$, is called an {\it
		efficient packing}. The set $\{(k,l)\,|\,k,l = 1,\ldots,n_0\; {\rm
		and}\; \beta^n(p_k,\varepsilon) \cap \beta^n(p_l,\varepsilon) \neq
	\emptyset\}$ is called the {\it intersection pattern} of the minimal
	$\varepsilon$-net  of the efficient packing. 
\end{defn}

There exists a canonical  simplicial complex
having as vertices the centers of the balls
$\beta^n(p_k,\varepsilon)$, which is constructed by adding a $k$-simplex for every collection of $k+1$ balls with nonempty intersection.

Following Kanai \cite{Kanai}, we call the graph $G(\mathcal{N})$ given by the 1-skeleton of the simplicial complex constructed above a {\it discretization} of $X$, with {\it separation} $\varepsilon$ and {\it covering radius $\varepsilon$} (or a {\it $\varepsilon$-separated net}). Further more, we say that $G(\mathcal{N})$ has {\it bounded geometry} iff there exists $\rho_0 > 0$, such that $\rho(p) \leq \rho_0$, for any vertex $p \in \mathcal{N}$, where $\rho(p)$ denotes the degree of $p$ (i.e. the number of neighbours of $p$). Furthermore, in the case of unbounded manifolds, we add, to the conditions in Definition \ref{def:epsilon-nets} above, the following requirement:
\vspace*{0.2cm}

(3) The graph $\mathcal{N}$ is maximal with respect to inclusion.
\vspace*{0.2cm}

If $M^n$ is  a closed, connected n-dimensional
Riemannian manifold such that it has sectional curvature $k_M$ bounded from
below by $k$, ${\rm diam}M^n$ bounded from above by $D$, and ${\rm Vol}M^n$
is bounded from below by $v$, such $\varepsilon$-nets can be  constructed by taking any maximal set of points with disjoint $\varepsilon/2$-balls, and 
the geometric generation process of such a maximal set is based on the close connection between the growth rate of volumes of balls in manifolds and Ricci curvature. Moreover, since only volumes of balls arguments are employed, one
can replace the last condition by the more general one ${\rm Ric}_M \geq (n-1)k$ (see,
e.g., [32]). 
%
%
Furthermore, the connection between volume and Ricci curvature can be extended to more general measures and to 
 the {\it generalized Ricci curvature} developed by Lott-Villani \cite{LV} and Sturm \cite{St}. Therefore, it is natural to seek and generalize the results of Grove and Petersen \cite{GP} to metric measure spaces.  Such an extension of the classical case construction to the metric measure spaces context does, indeed exist \cite{Sa11-1}. 

%


Most importantly,  the discretizations rendered in the process of proof, are, indeed, {\it coarsely equivalent} ({\it coarsely isometric}) -- see Definition \ref{eq:CoarseEmbed} -- to the original metric measure space, in a manner that is quite deep (see \cite{Sa11-1}, Theorem 5.6 and Theorem 5.11, as well as their corollaries). Still, the most important consequence, from our point of view, appears already in \cite{Sa11-1}, Corollary 4.6, 
namely the fact that the graphs (networks) obtained encode the essential topology (homotopy) of the sampled space. Moreover, it is a byproduct of the construction (see \cite{GP}, \cite{Sa11-1}) that even if the  graph-based reconstruction of the given space is only a coarse one, the number of such ``guesses'' is finite, and is, moreover, independent of the specific geometry of the manifold and the measure, and it depends only on bounds on dimension, volume, curvature and diameter.

It is therefore, only natural and intuitive to expect that a similar Ricci curvature based sampling should apply to weighted networks, viewed as metric measure spaces (with measures concentrated at the nodes and metric prescribed by the edge weights), should encode the essential topology of the network.


\begin{rem}
However, while most researchers have come to view graphs/networks as metric spaces, the model of {\it metric measure spaces} for weighted networks, is yet to be widely adopted by the Complex Networks community, even though it is a most natural way of describing the properties of such objects. To this end, a natural idea is to incorporate the node weights and edge weights into one expressive metric, thus rendering any weighted network into a ``honest to God'' metric space, whose geometric properties (curvature, geodesics, embeddings, etc.) can than be investigated with (more-or-less) classical tools. (An example of such a comprehensive metric is the so called {\it degree path metric} -- see, e.g. \cite{Ke}. Another well known such metric is the {\it resistance metric} (see, e.g. \cite{DD}). For the convenience of the reader we have expanded on these important metrics in Appendix 3.) 

It is worth noting that, initially, only edge weights were considered, and for such weights a global metric is standard, namely the so called {\it path metric} (which Computer Science students commonly confuse with the most popular algorithm for its computations, namely the {Dijkstra algorithm}.) 
A number of modeling problems, in particular some of Mathematical Biology motivated lately the search for ``good'' metrics that take into account node weights exclusively. (For such an example see, for instance, \cite{SA}.) 
From the applications viewpoint it become clear quite early that a comprehensive model of networks should include both edge and node weights (and that the combinatorial model is, a fortiori, quite unsatisfactory). The reason that such complete sets of weights were not previously employed in large scale experimental studies was due to the lack of a large enough sample of available data sets. 
However, as the field of Complex Networks evolved and expanded, better data sets have been published, thus the focus of the community is finally shifting towards the study of such networks. 

The discussion above raises the natural question whether there exists an optimal metric, at least for understanding of the problems studied in the present paper. The somewhat disappointing (but expectable) answer is that there is no ``best'' metric fitted for the study of all networks, not even for the study of a certain type of discrete Ricci curvature and its flow. 
From the empirical networks viewpoint this fact is easily understandable: Each community has developed its specific metric or set of metrics which best serve its needs, in the sense that they closely model the type of network that represents the object of interest for that group. (In certain settings, a natural metric, up to minor variations, imposes itself, such in the case of Imaging -- see the relevant examples in the sequel. The same holds true for electric networks, where the resistance metric is the default one.) These ad hoc metrics might differ widely from the standard, mathematically motivated ones mentioned above and, as such, they do not necessarily behave as well in concordance with the various type of curvatures. 
From a mathematical viewpoint, it is important to remember that the discrete Ricci curvatures captures different aspects of the classical notion (the Forman-Ricci curvatures vs. the Haantjes curvature), or include higher dimensional aspects of the networks (full Forman and Haantjes, vs. graph Forman). Therefore, their behaviors in conjunction with the various metrics also diverge widely. It is, therefore, advisable to understand their comparative behavior using the same background metric. This is the approach adopted herein, where the standard path metric was employed.  Note that using that, when using a common metric, various discretization capture the same essential behavior of evolving networks -- see \cite{SPRSJC}. 
Moreover while from a mathematician's viewpoint, the degree path metric is apparently ideal (as the number of studies employing it seems to suggest), in practice its connection with a specific type of discrete curvature, e.g. Olliver-Ricci curvature, is far less strong than expected. In fact, for certain types of Semantic Networks, the probabilistic version of the resistance metric (see Appendix 3) is by far the best suited \cite{CMS21}.  
Thus the problem of determining the best suited metric to be used with a specific discrete Ricci curvature, for each type of network, deserves an extensive experimental study, whose scope and breadth are far beyond those of the present paper. 
\end{rem}



Yet another -- and simpler -- generalization of Ricci curvature to metric measure spaces has been devised, based on the works of Bakry, Emery and Ledoux \cite{BE, BL}. 
More specifically, they consider {\it manifolds with density}, i.e. Riemannian manifolds $M^n$, additionally endowed, with a smooth, positive density function $\Psi = \Psi(x)$, that induces weighted $n$- and $(n-1)$-volumes, e.g. in the classical cases $n = 2$ and $n = 3$, volume, area and length. More precisely, the  volume, area and length elements $dV, dA, ds$ of the weighted manifold $(M^n,\Psi)$ are given by:
\[dV = \Psi dV_0, dA = \Psi dA_0, ds = \Psi ds_0\,,\]
where $dV_0$ represents the natural (Riemannian) volume element of $M^n$, etc. Usually 
density functions of the type $\Psi(x) = e^{-\varphi(x)}$ are considered.
(However more general density functions have also been studied -- see \cite{Mo}.) 


The  Bakry, Emery and Ledoux generalization to manifolds with density of Ricci curvature is defined as:
\begin{equation} \label{eq:K-BEL}
{\rm Ric_\varphi} = {\rm Ric} + {\rm Hess}\varphi\,,
\end{equation}
(where {\rm Hess} denotes the Hessian matrix).
It is important for us to note that, for surfaces,
Ricci curvature reduces, essentially, to Gaussian curvature $K$, more precisely $K = \frac{1}{2}{\rm Ric}$. 
Another, closely related, but perhaps more intuitive, generalization of Gaussian curvature for weighted surfaces is due to Corwin et al. \cite{Co+}, namely:
\begin{equation} \label{eq:K-Corwin}
K_\varphi = K + \Delta\varphi\,,
\end{equation}
where $\Delta\varphi$ denotes the Laplacian of $\varphi$. It should be stressed that this represents a natural generalization: It reduces to the usual Gaussian curvature (up to a multiplicative constant) for $\varphi \equiv {\rm const.}$ and, moreover, it also satisfies a generalized Gauss-Bonnet Theorem.
%
The reader should note that, unlike Morgan \cite{Mo}, but following other authors, and, moreover, in concordance with Forman's work, we adopt  here the ``+'' convention for the sign of the Hessian and Laplacian commonly, since this is more intuitive, at least in the context of Imaging where weights, that is grayscale values are always positive. 
This simpler approach can be indeed applied to the sampling of images, where grayscale value can be interpreted as a measure (distribution) over the pixels' grid \cite{LLZS}. This fact further encourages us to extend the Ricci curvature-based sampling to Complex Networks.

\section{Discrete Ricci Curvature}

There are several ways of incorporating both node and edge weights into a comprehensive Ricci curvature for networks. 
Since a full explanation of the ideas and techniques employed in devising the notions below would take us too far afield, and, furthermore, we have detailed this in previous works and, moreover, in the present paper we employ these discrete notions of Ricci curvature only in the practical, network context, we do not develop these definitions here, but rather restrict ourselves essentially to bringing the relevant formulas.

The simplest (from a computational viewpoint) type of network Ricci curvature is the 1-dimensional (graph) version introduced in \cite{SMJSS}  of Forman's Ricci curvature \cite{Fo}, originally devised for weighted $CW$ complexes of dimension  $\geq 2$. It is derived from the classical  {\it Bochner-Weitzenb\"{o}ck formula} of Riemannian Geometry \cite{Jo},  that connects the {\it Bochner} (or {\it Hodge}) {\it Laplacian} on a manifold and its various curvatures, namely 
\begin{equation} \label{saucan-eqn:1}
\Box_p = dd^* + d^*d = \nabla_p^*\nabla_p  + {\rm Curv}(R)\,,
\end{equation}
where $\Box_p$ denotes the  {\it Riemann-Laplace operator} $\Box_p$ on $p$-forms, $\nabla_p^*\nabla_p$ is the {\it Bochner} (or {\it rough}) {\it Laplacian} and ${\rm Curv}(R)$ is an 
expression of the {\it curvature tensor} with linear coefficients  
where $\nabla_p$ denotes the {\it covariant derivative} operator.

Forman \cite{Fo} demonstrated that an analogue of the Bochner-Weitzenb\"{o}ck formula holds in the general setting of CW complexes, of which graphs and polyhedra are particular cases. More precisely, he showed that there exists a canonical decomposition of the form:
\begin{equation} \label{saucan-eqn:3}
\Box_p = B_p + F_p\,,
\end{equation}
where $B_p$ is a {\it non-negative operator} and $F_p$ is a diagonal matrix. $B_p$ and $F_p$ are called,
in analogy with the classical Bochner-Weitzenb\"{o}ck formula, the {\it combinatorial Laplacian} and the {\it
combinatorial curvature function}, respectively. 

%
 In particular, for $p = 1$, we obtain
\begin{align}\label{eq:bochner1}
\Box_1p(e) = \Box_1(e,e) = \frac{w(v_1)}{w(e)} + \frac{w(v_2)}{w(e)}\,;
\end{align}
where $v_1,v_2$ are the end nodes of the edge $e$, and $w(v_1),w(v_2),w(e)$ represent their respective weights.  (For the definition in the general case, see Appendix 2.)

Then, given a $p$-dimensional cell $\alpha = \alpha^p$,  one defines the {\it
curvature function} $F_p: C_p \rightarrow C_p$
\begin{eqnarray}
\mathcal{F}_p(\alpha) = \langle F_p(\alpha),\alpha \rangle. 
\end{eqnarray}
In the special case of dimension $p=1$ one defines, by analogy with the classical case, the
 discrete (weighted) {\it Forman-Ricci curvature} on $\alpha = \alpha^1$, i.e. 1-cells (edges), namely 
\begin{eqnarray}
\label{saucan-def:Ricci}
{\rm Ric_F}(\alpha)= \mathcal{F}_1(\alpha).
\end{eqnarray}

Before proceeding further, let us note the similarity between Formula (\ref{eq:K-Corwin}) and Forman's Formula  (\ref{saucan-eqn:3}), and the (essential) identification of Gaussian and Ricci curvature in the surface case, further suggest Forman's Ricci curvature as a possible sampling tool. 
Furthermore,  this similarity suggests that theleft-hand term $\Box_1$ can be interpreted not just like a ``corrected'' Laplacian, but also as a ``adjusted'' curvature. Thus, one is conducted towards a possible sampling of networks based on Forman-Bochner Laplacian, instead of its Ricci curvature counterpart.
This coupling with a Laplacian, that opens further directions for possible applications, represents yet another advantage, besides its computational simplicity, of Forman's curvature over Ollivier's one.

While the formula of ${\rm Ric_F}$ in the case of generic $n$-dimensional $CW$ complexes is quite complicated (see Appendix 2), in the 1-dimensional case, that is of graphs/networks, it reduces to the following elementary formula:
\begin{equation}	\label{FormanRicciEdge}
\mathbf{F}(e) = w(e) \left( \frac{w(v_1)}{w(e)} +  \frac{w(v_2)}{w(e)}  - \sum_{e(v_1)\ \sim\ e,\ e(v_2)\ \sim\ e} \left[\frac{w(v_1)}{\sqrt{w(e) w(e(v_1))}} + \frac{w(v_2)}{\sqrt{w(e) w(e(v_2))}} \right] \right)\,;
\end{equation}
%
where $\mathbf{F}$ denotes the Forman-Ricci curvature for networks\footnote{notice the change of notation for clarity reasons}, and where 
\begin{itemize}
	\item $e$ denotes the edge under consideration between two nodes  $v_1$ and  $v_2$;
	\item  $w(e)$ denotes the weight of the edge  $e$ under consideration;
	\item  $w(v_1), w(v_2)$ denote the weights associated with the nodes  $v_1$ and  $v_2$, respectively;
	\item  $e(v_1) \sim e$ and  $e(v_2) \sim e$ denote the set of edges incident on nodes  $v_1$ and  $v_2$, respectively, after \textit{excluding} the edge $e$ under consideration which connects the two nodes   $v_1$ and  $v_2$, i.e. $e(v_1),e(v_2) \neq e$. 
\end{itemize}

While, as discussed above, we are mainly interested in edge-centric measures, and more specifically in Ricci curvature, one can also define the Forman-scalar curvature, in manner similar to the $PL$ manifolds case pioneered by Stone \cite{St} (see also \cite{GS}), to be
\begin{equation} 
\kappa_{{\bf F}(v)} = {\bf F}(v) 
= \sum_{e_k \sim v} {\rm Ric}_F(e_k)\,;
\end{equation}
where $e_k \sim v$ denotes  the edges $e_k$ adjacent to the vertex $v$. 

For 2-dimensional complexes the formula of the Forman-Ricci curvature is only slightly more complicated:
\begin{equation} \label{eq:Forman-2d}
\hspace*{-2.65cm}{\rm Ric}_{\rm F} (e) = w(e) \left[ \left( \sum_{e \sim f} \frac{w(e)}{w(f)}+\sum_{v \sim e} \frac{w (v)}{w(e)}	\right) \right. 
\end{equation}
\[\hspace*{2.65cm}
- \left. \sum_{\hat{e} \parallel e} \left| \sum_{\hat{e},e \sim f} \frac{\sqrt{w(e) \cdot w(\hat{e})}}{w(f)} - \sum_{v 	\sim e, v \sim \hat{e}} \frac{w(v)}{\sqrt{w(e) \cdot w(\hat{e})}} \right| \right] \; ;
\]
where ``$\hat{e} \parallel e$" denotes that the edges $\hat{e}$ and $e$ are parallel, that is they can both belong to a common 2-dimensional face, or have a common vertex, but not both, simultaneously; and where ``$\sim$" has the same significance as in the previous formula (i.e. incidence).

There are two special cases extremely important in applications. The first case is that of square grids, as they arise most naturally in Imaging. Due to the specific (and evident) parallelism relationship, the resulting formula attains the following simple form:
\begin{equation} \label{eq:Ricci-Forman2D}
\hspace*{-0.5cm}{\rm Ric}_F(e_0) =
w(e_0)\left[\left(\frac{w(e_0)}{w(c_1)} +
\frac{w(e_0)}{w(c_2)}\right) -
\left(\frac{\sqrt{w(e_0)w(e_1)}}{w(c_1)} +
\frac{\sqrt{w(e_0)w(e_2)}}{w(c_2)}\right)\right]\,,
\end{equation}

The second special case is that of $PL$ manifolds and, more generally, of graphs and networks where the only 2-cycles are triangles. This type of structure is, again, relevant in Graphics and related fields. Moreover, it is precisely the type of graph (network) that we developed and discussed above. 
The fitting formula is
\begin{equation} \label{eq:Forman-2d-tr}
{\rm Ric}_F(e) = w(e)\left[\sum_{t > e}\frac{w(e)}{\omega(t)} + \left(\frac{w(v_1)}{w(e)} + \frac{w(v_2)}{w(e)}\right) \right.
\end{equation}
\[
\hspace*{4cm}  \left. -  \sum_{\tilde{e} \sim e}\left|\sum_{t > e,\, t > \tilde{e}}\frac{\sqrt{w(e)w(\tilde{e})}}{w(t)} - \sum_{v < e,\, v < \tilde{e}}\frac{w(v)}{\sqrt{w(e)w(\tilde{e})}} \right| \right] \, .
\]
where ``$t > e$" denotes that the edge $e$ is a face of the triangle (2-cycle) $t$.

To differentiate between the 1- and 2-dimensional versions of Forman-Ricci curvature, we shall refer to the former as the 
{\it graph} or {\it reduced Forman-Ricci curvature}, and to the later as the {\it full Forman-Ricci curvature} (or simply the 
{\it  Forman-Ricci curvature}). 

Again, we can define the fitting scalar curvature, in a similar manner:
\begin{equation} 
\kappa_{{\rm Ric}_F}(v) = {\rm Ric}_F(v) = \sum_{e_k \sim v} {\rm Ric}_F(e_k)\,;
\end{equation}
where, again, $e_k \sim v$ stands for all the edges $e_k$ adjacent to the vertex $v$.

\begin{rem}	
Note that, while we concentrated above on Forman's Ricci curvature, one can substitute instead Ollivier's discretization \cite{Ol1}, \cite{Ol2}, as it is widely employed in Network Theory (see, e.g. \cite{WJB}, \cite{Ni2015}, \cite{Ni2019}, \cite{Allen1}, \cite{Allen2}). We preferred Forman's curvature for a number of reasons, perhaps not the least of them being its clear computational advantages, as well as for the easy manner in which it extends to higher dimension. An additional advantage we already mentioned above, namely that by its defining formula  it comes coupled with a Laplacian, also allows us to explore new directions of study -- See Section \ref{sec:Kernels} below.
\end{rem}

The third type of discrete Ricci curvature that we have considered in our experiments is one that we recently introduced \cite{SSJ}, \cite{SSJ1}, namely the Haantjes-Ricci curvature \cite{Ha}. In contrast with Forman-Ricci curvature, whose definition is based on the discretization of and advanced techniques in Riemannian Geometry, namely the {\it Bochner-Weitzenb\"{o}ck formula} (see for instance \cite{Jo}), Haantjes curvature is derived from a purely metric notion \cite{Ha} devised originaly to study curves in metric spaces. Before we can define this new type of Ricci curvature, we first have to introduce the  curvature of paths and cycles:

Given a curve $c$ in a metric space $(X,d)$, and $p, q, r$ 
points on $c$, $p$ between $q$ and $r$, the Haantjes curvature of the curve $c$, at the point $p$ is defined as
\begin{equation}                         
\label{eq:haantjes-1}
\kappa_{H}^2(p) = 24\lim_{q,r \rightarrow p}\frac{l(\widehat{qr})-d(q,r)}
{\big(d(q,r)\big)^3}\,;
\end{equation}
where $l(\widehat{qr})$ denotes the length, in the intrinsic metric induced by 
$d$, of the arc $\widehat{qr}$. In the network case, 
$\widehat{qr}$ is replaced by a path $\pi = v_0,v_1,\ldots,v_n$, and the 
subtending chord by edge $e = (v_0,v_n)$. When discretizing this notion we should start from the following two observations: (a) The limiting process has no 
meaning in this discrete case; (b) The normalizing constant ``24'' which 
ensures that, the limit in the case of smooth planar curves will coincide with 
the classical notion, is redundant in the discrete setting setting. We are thus led to the following 
definition of the \textit{Haantjes curvature of a simple path} $\pi$:
\begin{equation}                         
\label{eq:haantjes-path}
\kappa_{H}^2(\pi) = \frac{l(\pi)-l(v_0,v_n)}{l(v_0,v_n)^3}\,;
\end{equation}
where $l(v_0,v_n) = d(v_0,v_n)$, where, as above, $e = (v_0,v_n)$ represents an edge and where $l$ denotes the length, in the considered metric $d$. (If no specific metric is given, one can always metricize an edge weighted graph using the standard {\it path metric} -- see, e.g. \cite{DD}.) In particular, in the case of the combinatorial metric, we obtain that, for path $\pi = v_0,v_1,
\ldots,v_n$ as above, $\kappa_H(\pi) = \sqrt{n-1}$. It is important to note that considering simple 
paths does not represent any restriction, since a metric arcs are, by definition, simple 
curves. However, to capture in the discrete context the local nature of the Ricci 
(and scalar) curvature, as well as to make computations feasible, we shall restrict to paths $\pi$ such that $\pi^{*} = v_0,
v_1,\ldots,v_n,v_0$ is an {\it elementary cycle}.

In the case of general edge weights, one can not apply directly the Haantjes curvature, since in this case the network is not necessarily a metric space, given that the total weight $w(\pi)$ of a path $\pi = v_0,v_1,\ldots,v_n$ is not 
necessarily smaller than the weight of its subtending chord $e=(v_0,v_n)$. In fact, a graph endowed with general weighted graphs will not constitute a metric space, since such weights can (and usually will) fail the triangle inequality. 
However, it is still possible to use the Haantjes curvature, and we even can turn this to our own 
advantage,  by reversing the roles of $w(\pi)$ and $w(v_0,v_n)$ in the definition of
the Haantjes curvature and assigning a minus sign to the curvature of cycles for 
which this occurs. In consequence, this approach allows us to define a variable sign 
Haantjes curvature of cycles (hence, as it will become clear below, a Ricci curvature as well), even if the given 
network is not a directed one.

Before proceeding further, it should be noted that the case when $w(v_0,v_1,\ldots,v_n) = w(v_0,v_n)$, i.e., that of zero 
curvature of the 2-cell $\mathfrak{c}$ with $\partial \mathfrak{c} = v_0,v_1,\ldots,v_n,v_0$,  
straightforwardly corresponds to the splitting case for the path metric induced by 
the weights $w(v_i,v_{i+1})$.

We can now define Haantjes-Ricci curvature 
in a straightforward manner,  as
\begin{equation}
\label{eq:haantjes-compute}
	\kappa_{H}(e) = {\rm Ric}_{H}(e) = \sum_{\pi \sim e}\kappa_{H}(\pi)\,;
\end{equation}
where $\pi \sim e$ denote the paths that connect the vertices adjacent to the edge $e$.

Moreover, as for the Forman-Ricci curvatures, one can also define the Haantjes-scalar curvature:
\begin{equation} 
	\kappa_{H}(v) = {\rm scal}_{H}(v) = \sum_{e_k \sim v} {\rm Ric}_{H}(e_k)\,;
\end{equation}
where $e_k \sim v$ stands for all the edges $e_k$ adjacent to the vertex $v$. 

\begin{figure}[htb]
	\begin{center}
		\includegraphics[scale=2]{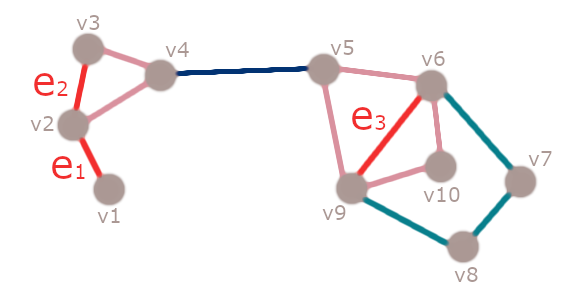}
		%
		%
	\end{center}
	\caption{A small combinatorial network (i.e. with all vertex and edge weights equal to 1) based on the well known ``Dolphins'' social network \cite{Kun}, emphasizing the differences between the three types of Ricci curvature considered. ${\rm Ric}_H(e_1)$ is zero, since there are no cycles (faces) adjacent to $e_1$ and  ${\bf F}(e_1)$ also equals $0$ (as it is really seen from the degrees of its end vertices) and, moreover, so does ${\rm Ric}_F(e_1)$, since in this case it coincides, due to the absence of adjacent faces, with ${\bf F}(e_1)$. 
	 ${\rm Ric}_H(e_2) = \sqrt{2}$, given that there is only one face -- a triangle -- adjacent to $e_2$. Degree counting easily renders ${\bf F}(e_2) = -1$, and ${\rm Ric}_F(e_2) = 2$. In the case of $e_3$, degree counting renders ${\bf F}(e_3) = -4$. Since the edge $e_3$ is adjacent to 2 triangles and a quadrangle, ${\rm Ric}_H(e_3) = 2\sqrt{2} + \sqrt{3}$, while given the fact that there is only one edge parallel to $e_3$, namely $(v_7v_8)$, we obtain that ${\bf F}(e_2) = -4$. 
    }
\end{figure}

\subsection{Curvature Based Sampling}

As we have already emphasized in the introduction, Ricci curvature allows us not only to focus on the edge-centric study of networks, but also to analyze the distribution and role of higher order correlation in networks. It follows, therefore, that an edge-based sampling is the natural one, if one wishes to explore the edge and higher order correlations structure. Nevertheless, the classical graph-theoretical approach to networks is still relevant, thus we also explore the vertex-based networks' sampling and compare it to the edge based one.

We first exemplify the network sampling using all the types of curvatures introduced above on two examples of real-life weighted networks:\footnote{We restrict only to these so not to overextend the experiments section.} The classical ``Kangaroo'' one, which due to its small size allows one to better relate the computations to the network structure, and the increasingly popular ``Les Mis\'{e}rables'' one, describing the relationships between the characters in the classical Hugo novel. The data for all the networks, except that for the ``C. elegans'', was downloaded from the KONECT website \cite{Kun}; in the later case the source is \cite{Cho++}. For each network and each type of curvature, both edge (Ricci curvature) based and vertex (scalar curvature) are included. In the original network, Ricci curvature is depicted according to the standard method, i.e. edge thickness being proportional to the absolute value of the curvature, whereas sign is indicated using the (standard) color code: Red color for negative curvature and blue color for positive one. All the considered networks are weighted ones -- for the nature of the specific weights the reader is referred to \cite{Kun}, \cite{Cho++} and the references therein. 
The sampling method adopted here is the simplest, yet widely adopted one of high-pass filtering the curvature with high absolute curvature. More precisely, for a prescribed percentage of absolute curvature, the vertices with curvature below the chosen threshold are removed, together with the edges adjacent to them, in case that vertex-based sampling is adopted. I the case when edge-based sampling is chosen, the procedure is similar, with the evident difference that, in this case, edges are removed. Note that in this case the ends of the deleted edges are not removed, except in the case of so called leaves (i.e. edges with at least one vertex of degree 1). The percentages differ from a network to another and from a type of curvature to another and they were chosen such that approximatively the same number of vertices would be retained. 
%
%
\begin{figure}[htb]
	\begin{center}
		\hspace*{-1.cm}\includegraphics[scale=0.23]{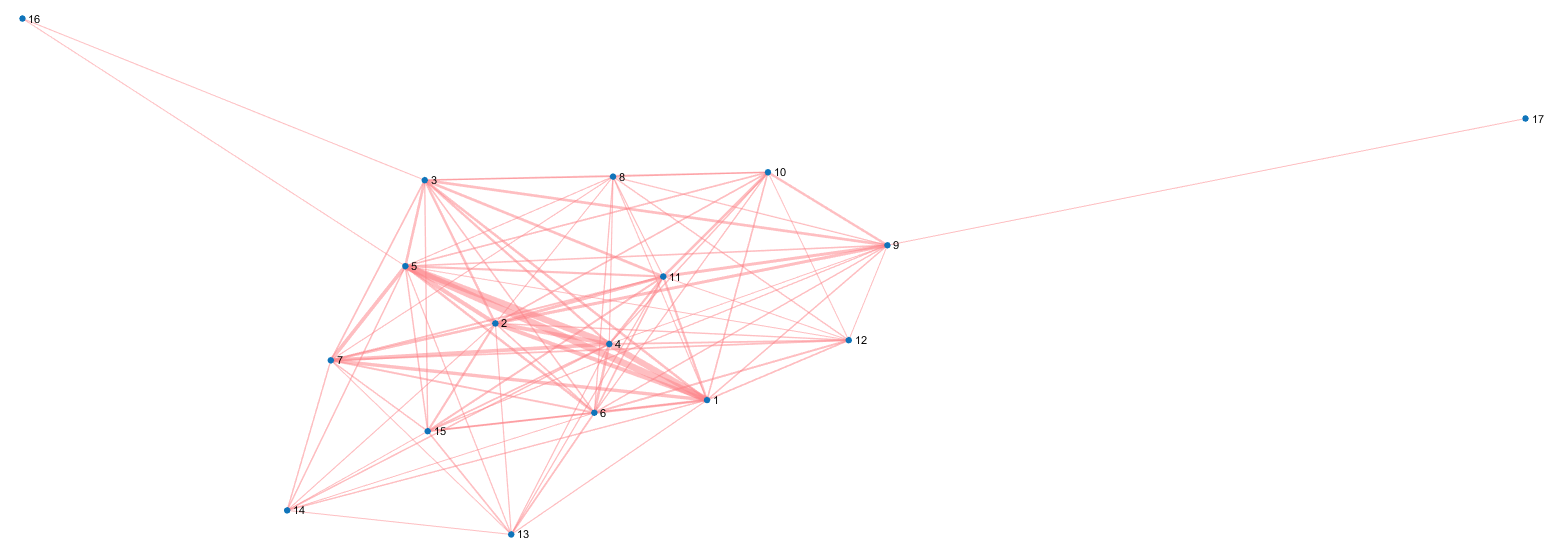} 
		\includegraphics[scale=0.375]{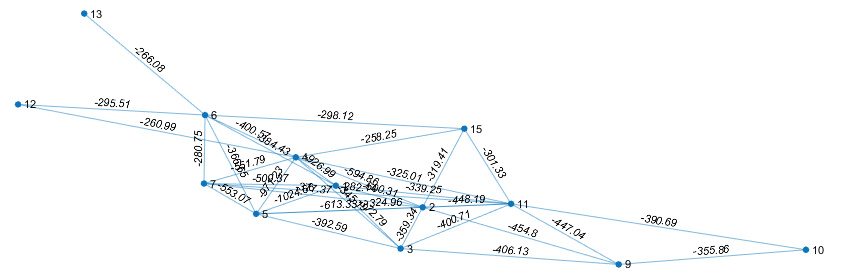}		\hspace*{-2.25cm}\includegraphics[scale=0.25]{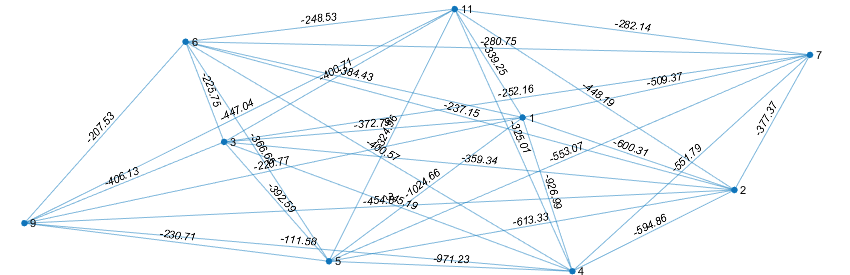}
		%
		%
	\end{center}
	\caption{The graph Forman-Ricci curvature based sampling by vertices (middle) and by edges (below) of a Kangaroo social network (above).  Here 40$\%$ of the edges were retained, yet the main features of the image is still clearly visible. }
\end{figure}


\begin{figure}[htb]
	\begin{center}
		\hspace*{-2.2cm}\includegraphics[scale=0.23]{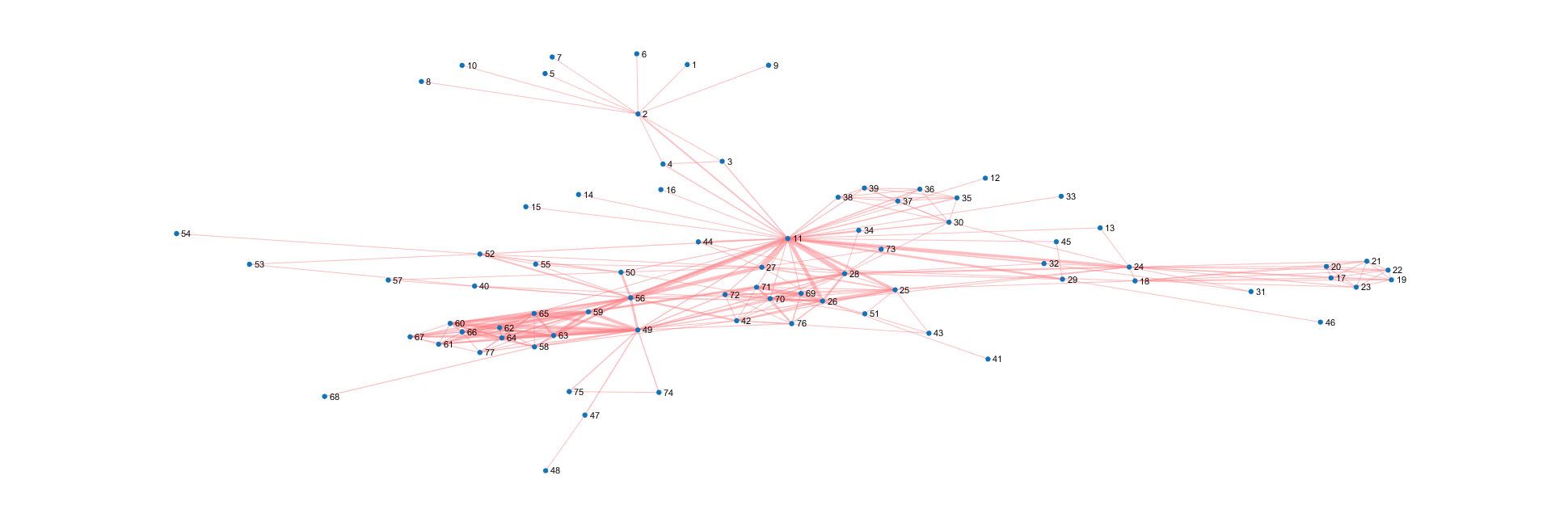} 
		\includegraphics[scale=0.375]{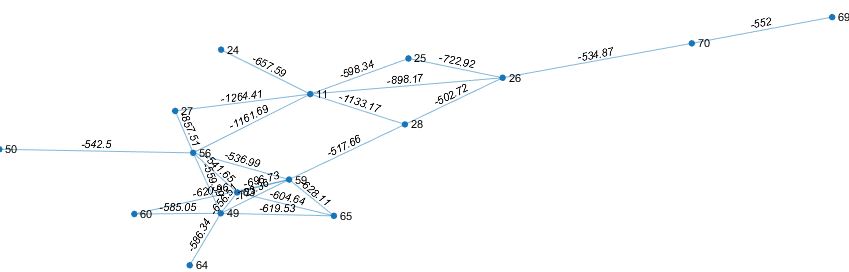}
		\includegraphics[scale=0.375]{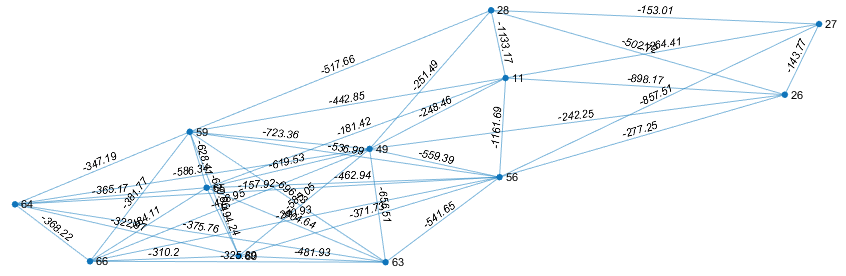}
		%
		%
	\end{center}
	\caption{The Graph Forman-Ricci curvature edges-based sampling (middle) and by vertices-based (below) 
		of the {\em Les Mis\'{e}rables}  social network (above).
		}
\end{figure}

\begin{figure}[htb]
	\begin{center}
		\includegraphics[scale=0.65]{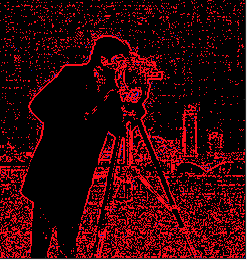}
		\includegraphics[scale=0.66]{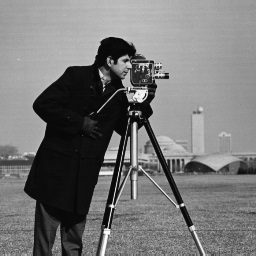}
		%
		%
	\end{center}
	\caption{The Graph Forman-Ricci curvature based sampling (left) of a classical test image (right).  Here 20$\%$ of the edges were retained, yet the main features of the image is still clearly visible. 
		Note the mostly red coloring of the resulting curvature images, showing that, except at a sparse set of pixels, the graph Forman-Ricci curvature is negative.}
\end{figure}


Ideally one should include in the computation of the full Forman-Ricci curvature all the cycles. However, determining them is a computationally costly operation, especially so since higher order cycles (faces) are quite common in real-life networks \cite{WSJ2}. Nevertheless, in the study of real-life networks, one can discard, at least in first approximation, the higher order faces. Indeed, both in social and biological networks, higher order cycles represent weaker connections, that should be taken, at best, with a lower weight signifying their reduced contribution (or probability of existence). In fact, and due partially to the powerful theoretical tools available in this case, the study of higher order correlations is commonly restricted only to order 3 ones (i.e. triangles). Given these considerations, and the fact that we are here only proposing a new paradigm for networks sampling, rather than concentrating for the extensive study of a specific (type of) network, we restrict our computations to 3- and 4-cycles, that is to faces that are triangles or quadrangles. 

\begin{figure}[htb]
	\begin{center}
		\hspace*{-2.25cm}\includegraphics[scale=0.23]{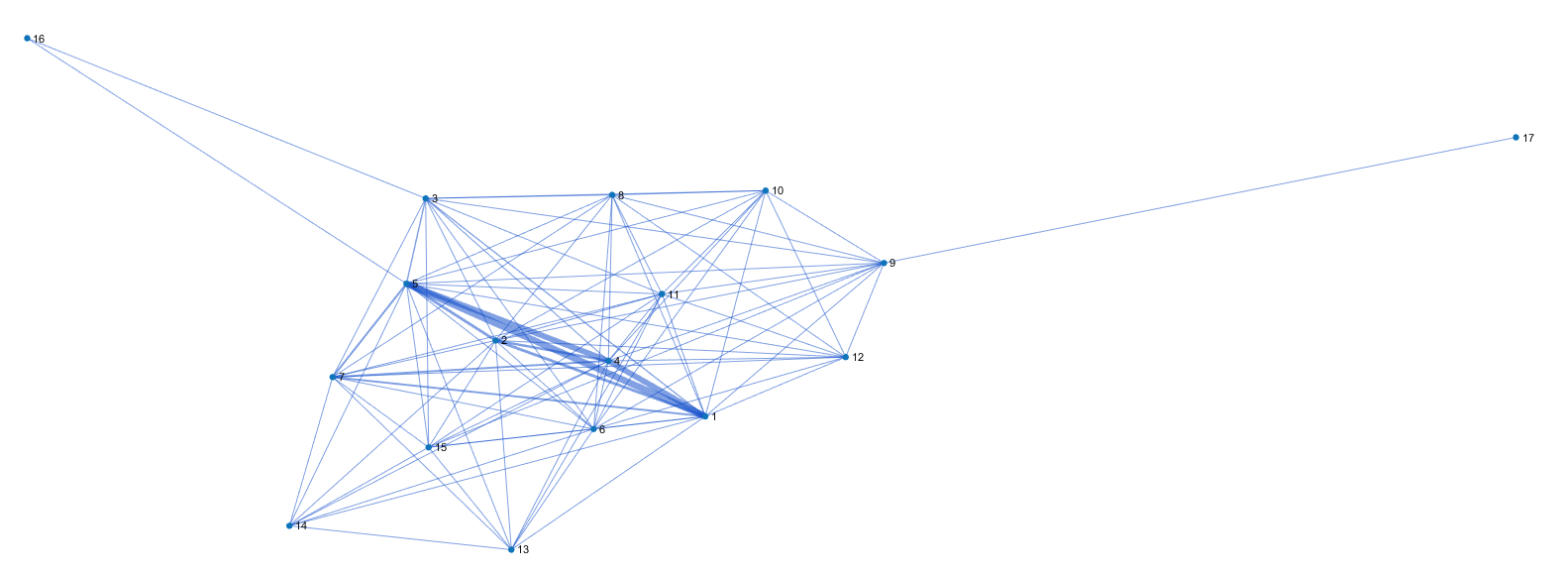}
		\includegraphics[scale=0.375]{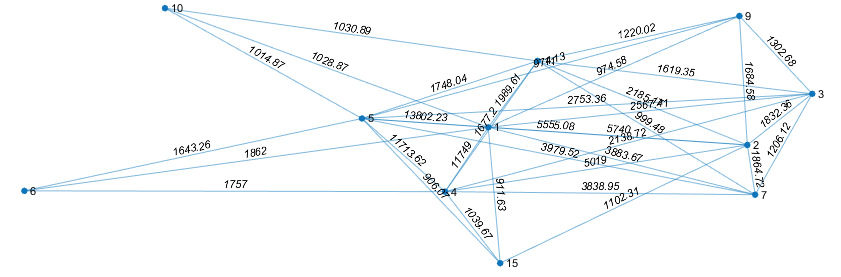}		\includegraphics[scale=0.375]{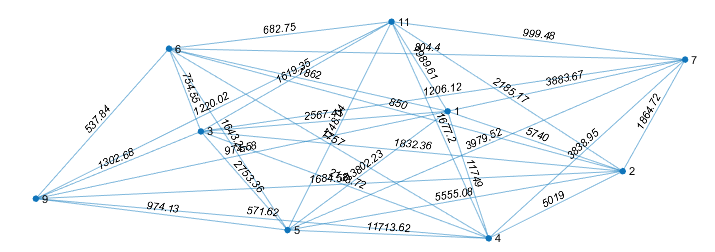}
		%
		%
	\end{center}
	\caption{The full Forman-Ricci curvature-based sampling by vertices (middle) and by edges (below) of a Kangaroo social network (above).  Here 40$\%$ of the edges were retained.}
\end{figure}

\begin{figure}[htb]
	\begin{center}
		\hspace*{-2.25cm}\includegraphics[scale=0.23]{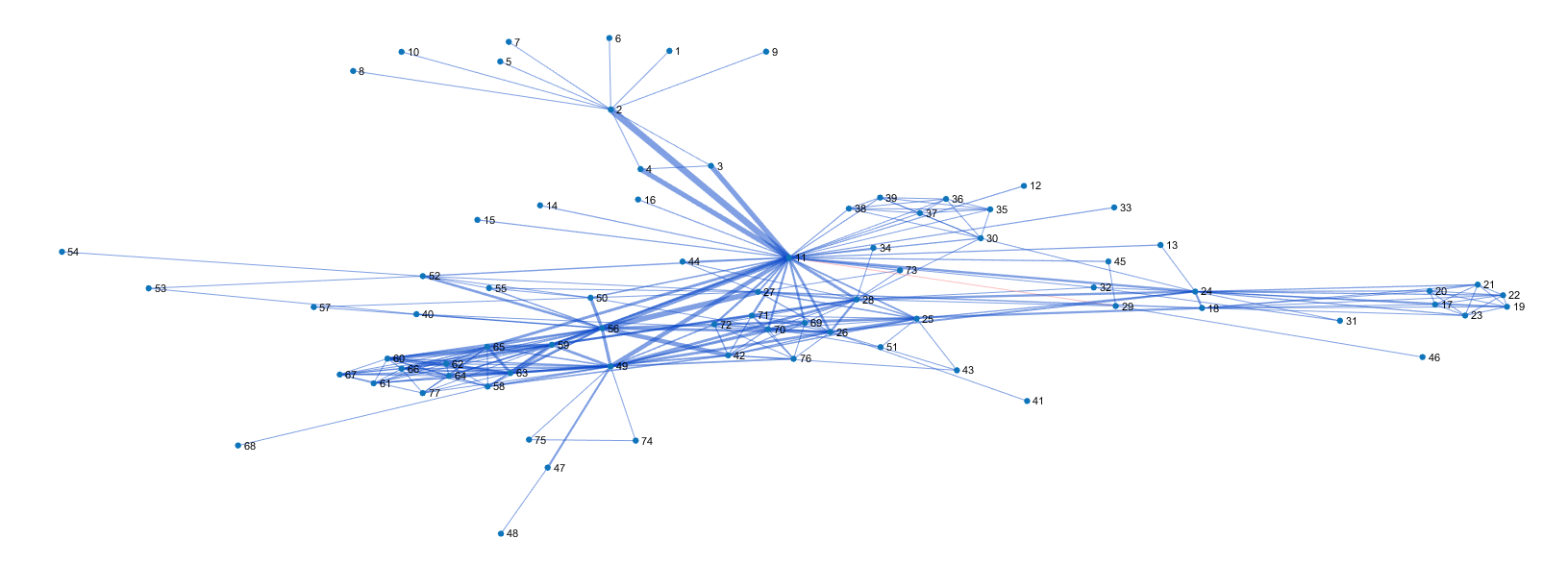}
		\includegraphics[scale=0.375]{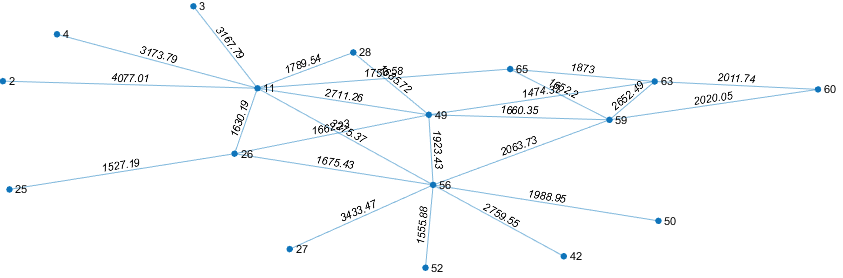}		\includegraphics[scale=0.375]{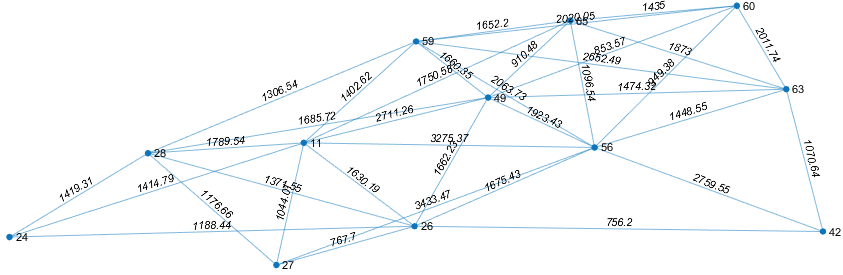}
		%
		%
	\end{center}
	\caption{The full Forman-Ricci curvature-based sampling by vertices (middle) and by edges (below) of the Les Mis\'{e}rables social network (above).  Here 10$\%$ of the edges were retained.} 
\end{figure}

The results of the vertex- and edge-based sampling, using the various considered Ricci curvatures, of a couple of well-known social networks are depicted  in Figures 2-4 and 5-7. 
Let us note that at least for medium sized and large scale networks, the ``common core'' of obtained by the intersection of the sampling results using the different curvatures can be quite small, illustrating the fact that each of the curvatures captures quite different different properties of the network, as it is the case with the various discretizations of any classical curvature. Thus the core represents the subgraph that comprises the essential geometric properties of a network. 
Note also that for smaller networks and/or larger percentage of remaining nodes, the cores are both larger and closer to each of the sampling results. 

\begin{figure}[htb] \label{fig:LesMis-Comon}
	\begin{center}
		\includegraphics[scale=0.23]{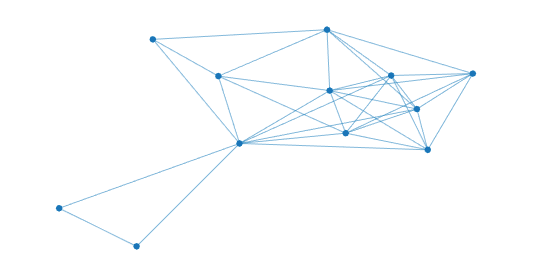}\includegraphics[scale=0.23]{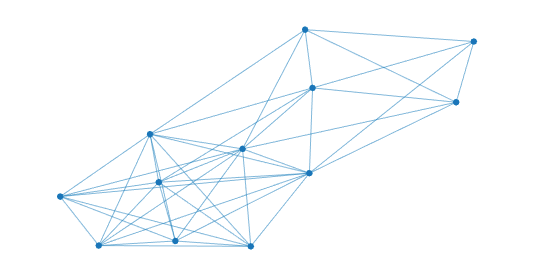}\includegraphics[scale=0.23]{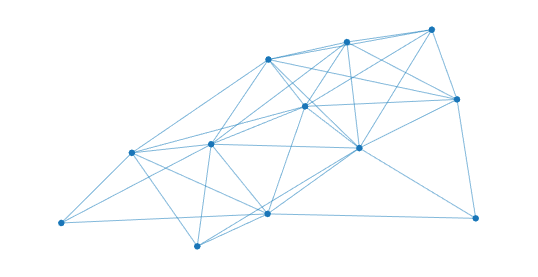}
		
		\includegraphics[scale=0.23]{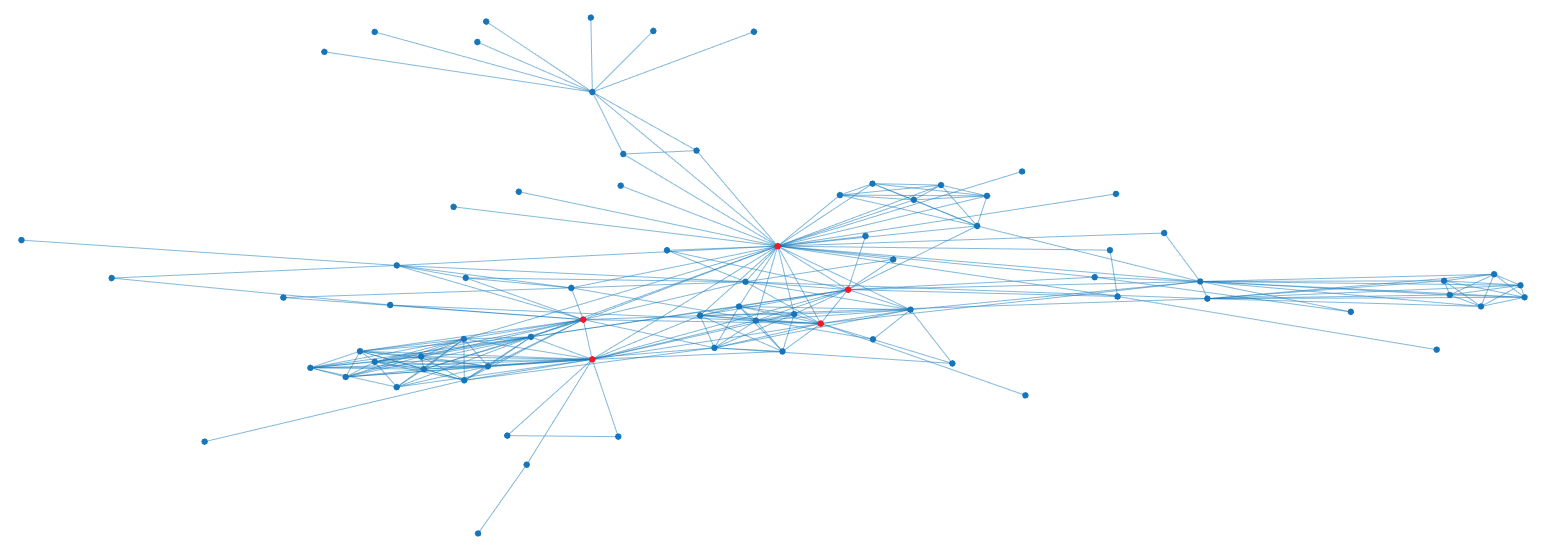}
		%
		%
	\end{center}
	\caption{The common ``core'' (below), i.e. set of vertices of the ``Les Mis\'{e}rables'' network after sampling by all of the curvatures considered (above, from left to right: Haantjes, graph Forman, full Forman). In each of the cases $15\%$ of the original vertices were retained. Note that the resulting, sampled networks are widely different. Moreover, note that the common nodes (in red, below) is quite restricted. This is a direct consequence of the fact that different discretizations of Ricci curvature capture diverse properties of the classical notion, thus rendering a small set of nodes and edges that capture the most essential geometric properties of the network.}
\end{figure}

In addition to these real life networks we are also including the example of a regular square grid as it arises naturally in Imaging -- See Figures 4, 8 and 9. More precisely, nodes denote centers of pixels and edges connect adjacent pixels, edge weights (lengths) are given by the differences in height, that is gray scale intensity of the image, and areas of the resulting 2-cells (quadrangles) are obtained, via Heron's formula from the edge lengths. (For details and illustrations see \cite{BCLS}.) The color scheme adopted for sign depiction is the same as before. 
Given the construction of the network and its weights, only vertex sampling sampling is considered. 
As one can see from the results depicted in Figure 4, when the sampling was applied to a classical test image, even when retaining only 25$\%$ of the original nodes (pixels) the original image is still quite easily recognizable and, moreover, curvature does indeed function as an edge detector, as expected. (See also \cite{Sa11-1} and the references therein.)

\begin{figure}[htb]
	\begin{center}
		\includegraphics[scale=0.65]{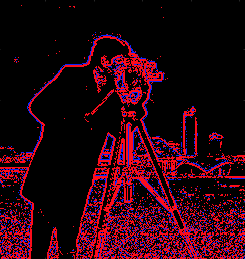}
		\includegraphics[scale=0.66]{7-0.jpg}
		%
		%
	\end{center}
	\caption{The full Forman-Ricci curvature based sampling of the ``Cameraman'' (left) compared to the graph Forman-Ricci curvature based one (right).  Here, again, 20$\%$ of the edges were retained. 
		Note that both blue and red edges are visible, showing that full Forman-Ricci curvature takes both positive and negative values.}
\end{figure}

Besides the two versions of Forman-Ricci curvature we experimented with the Haantjes-Ricci curvature. 
Again, as in case of the full Forman-Ricci curvature (and for the same reasons), we restrict ourselves to 3- and 4-cycles. 
In addition to its applicability to networks, it is a natural curvature measure for images \cite{Sa17}. However, in this case the obtained results are not as promising as those obtained using the full Forman-Ricci curvature, given than for images, that is for surfaces embedded in $\mathbb{R}^3$, it takes only positive values -- see Figure 9.

\begin{figure}[htb]
	\begin{center}
		\includegraphics[scale=0.425]{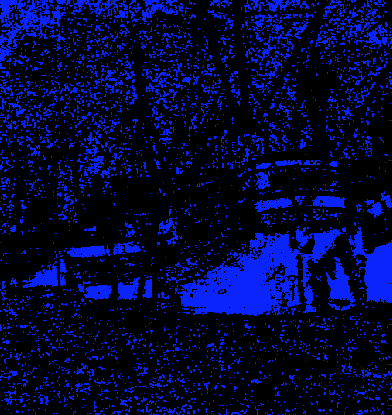}
		\includegraphics[scale=0.575]{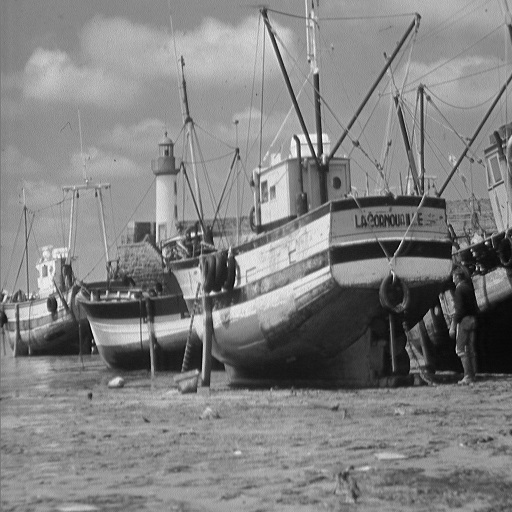}
		%
		%
	\end{center}
	\caption{The Haantjes-Ricci curvature based sampling (left) of a classical test image (right).  Here again 20$\%$ of the edges were retained and the main features of the image are still clearly visible. However, it is evident that Haantjes curvature is outperformed, in the case of images, by both Forman curvatures.
		Note the blue coloring of the resulting curvature image, showing that  the Haantjes-Ricci curvature is positive, a fact that follows from it being computed using the usual distances in Euclidean space.}
\end{figure}


\subsubsection{Ricci flow}

Besides the simple, direct approach to sampling that filters out a certain, prescribed percentage of edges, a more automatic -- and with far deeper theoretical motivation -- exists, namely the one based on the {\it Ricci flow} \cite{WSJ1}, \cite{WSJ2}, \cite{SSAZ}. This approach not only is preferable, since it captures the evolution of the network ``under its own pressure'', but it also seems especially effective in determining the so called {\it network backbone} \cite{WJS1}.

In contrast with our previous experiments in \cite{WJS}, \cite{WJS1}, \cite{WSJ2}, we employ here the {\it normalized flow}, in order to ensure that the metric does not contract (i.e. the weights do not converge to zero) and the network does not collapse to a node in the limit. This normalized flow for networks is defined, in analogy with that for surfaces (see, e.g. \cite{GY}), as 
%
%
%
%
\begin{eqnarray} \label{eq:NormalizedFormanRicciFlow}
\frac{\partial \omega(e)}{\partial t} = -\left( \rm{Ric} \left( \omega(e) \right)  - \rm{\overline{Ric}}\right) \cdot \omega(e)\,;
\end{eqnarray}
where $\rm{\overline{Ric}}$ denotes the mean Ricci curvature, and ${\rm Ric}$ stand for the graph Forman, full Forman or Haantjes Ricci curvature, according to the chosen curvature. Also, since in the setting of networks time is also assumed to evolve in discrete steps and each ``clock” (that is, time step)
has a length of 1, the Ricci flow takes the following form:
\begin{eqnarray} \label{eq:NormalizedFormanRicciFlow}
\tilde{\omega}(e) - \omega(e) = -\left( \rm{Ric} \left( \omega(e) \right)  - \rm{\overline{Ric}}\right) \cdot \omega(e)\, ; 
\end{eqnarray}
where  $\tilde{\omega}(e)$ denotes the new (updated) value of $\omega(e)$ (and $\omega(e)$ is the original 
i.e. given - one).

To emphasize its capabilities, we illustrate the flow on a  larger network, more precisely on the ``Windsurfers'' social network \cite{Kun}  -- see Figures 10-12. 
Note that a certain lower threshold for the edge weights needs to be predetermined, so ``noise'', that is to say edges with weights close to zero would not be included in the next iteration. In the experiments included here we have chosen to adopt a rather rather high threshold, such that at each step, at least $90\%$ of the edges appearing in the previous step would be preserved. 
Furthermore, in the computations using the graph Forman-Ricci curvature, the need to normalize curvature arose, in order to prevent abrupt jumps/collapses in curvature. However, no such normalization was needed for the Haantjes-Ricci curvature flow, a fact that seems to point to a relative advantage of this type of curvature, over the graph Forman one, as it seems to preclude fast collapse.

\begin{figure}[htb]
	\begin{center}
		\includegraphics[scale=0.575]{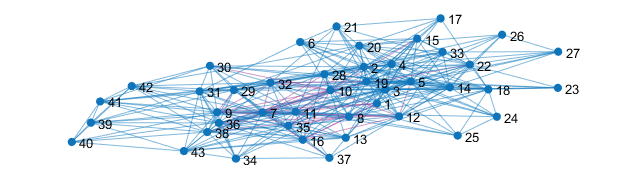}
		\includegraphics[scale=0.575]{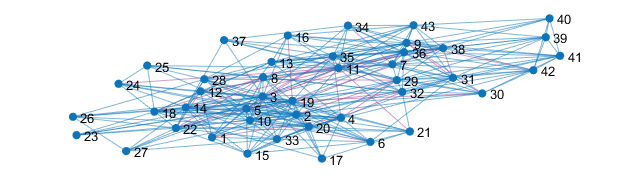}
		\includegraphics[scale=0.575]{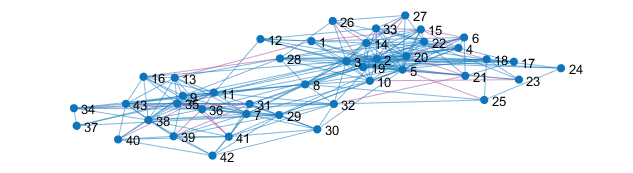}
		\includegraphics[scale=0.575]{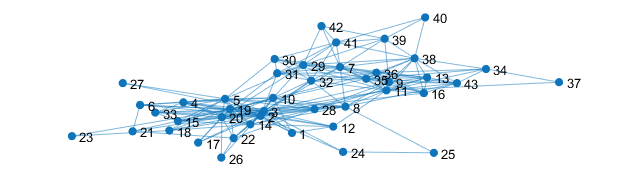}
		%
		%
	\end{center}
	\caption{The ``Windsurfers'' network and its evolution under the graph Forman-Ricci flow. In descending order, from above: The original network, and the network after 1, 3 and 5 iterations, respectively. }
\end{figure}

\begin{figure}[htb]
	\begin{center}
		\includegraphics[scale=0.575]{Windsurfers.png}
		\includegraphics[scale=0.45]{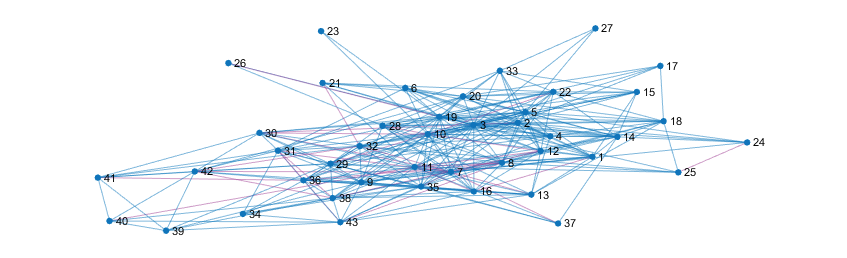}
		\includegraphics[scale=0.45]{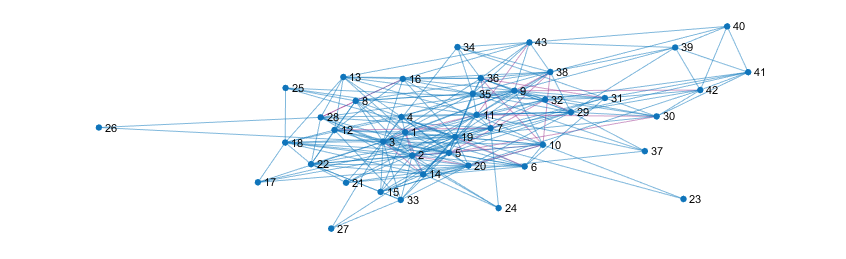}
		\includegraphics[scale=0.45]{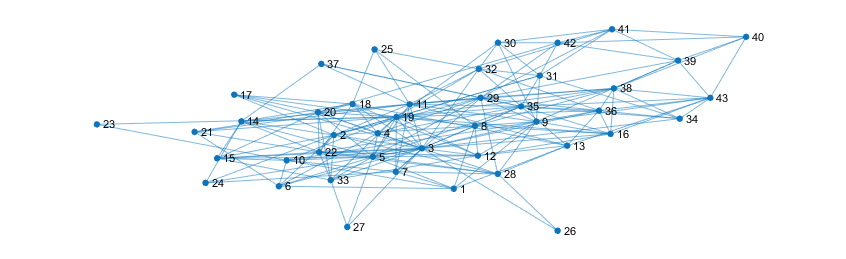}
		%
		%
	\end{center}
	\caption{The ``Windsurfers'' network and its evolution under the full Forman-Ricci flow. In descending order, from above: The original network, and the network after 1, 3 and 5 iterations, respectively. Note that, at least in this example, the Haantjes-Ricci flow is decreasing the size of the evolving network faster than the graph Ricci curvature.}
\end{figure}

\begin{figure}[htb]
	\begin{center}
		\includegraphics[scale=0.575]{Windsurfers.png}
		\includegraphics[scale=0.45]{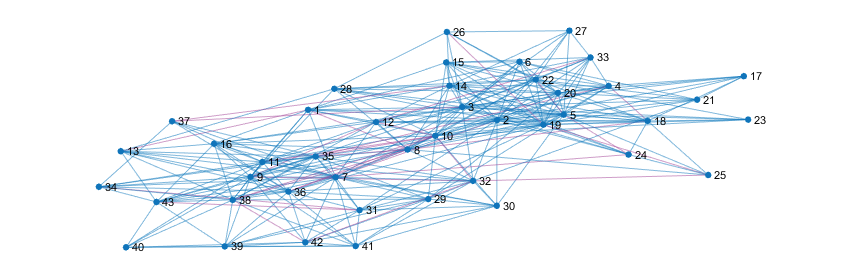}
		\includegraphics[scale=0.45]{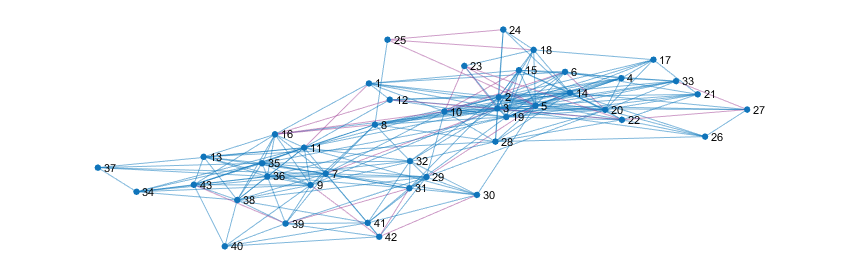}
		\includegraphics[scale=0.45]{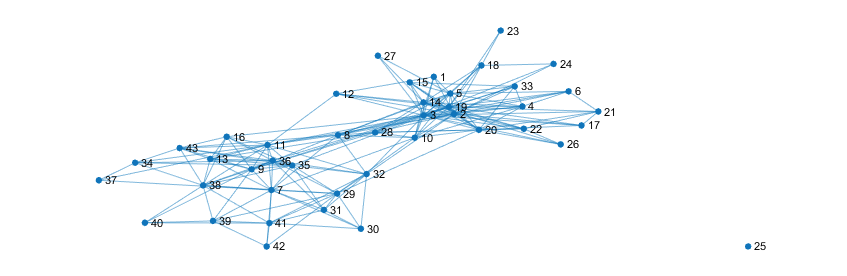}
		%
		%
	\end{center}
	\caption{The ``Windsurfers'' network and its evolution under the Haantjes-Ricci flow. In descending order, from above: The original network, and the network after 1, 3 and 5 iterations, respectively. Note that, at least in this example, the Haantjes-Ricci flow is decreasing the size of the evolving network faster than the graph Ricci curvature.}
\end{figure}

Note that, even only  a very small number of iterations were considered, Ricci curvature evolves, as expected from the theory in the smooth and combinatorial (circle-packing based) cases, to a constant. This phenomenon is most manifest in the Graph Forman-Ricci flow, and the least evident for the Full Forman-Ricci flow. Beyond the specific convergence speeds of each of the flows, this phenomenon occurs probably because, as already mentioned above, only triangular and quadrangular faces were considered, a fact which greatly simplifies and speeds-up computations, but also changes drastically the topology of the evolving 2-complex.

\begin{figure}[htb]
	\begin{center}
		\includegraphics[scale=0.43]{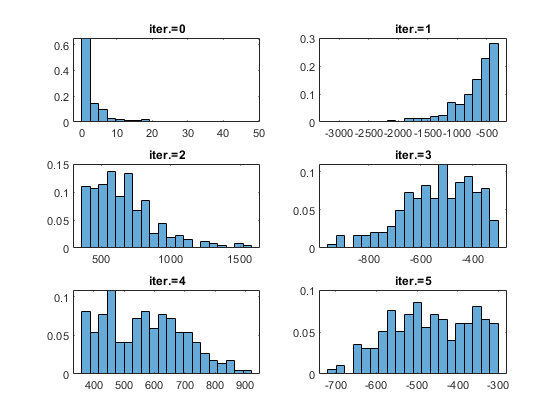}
		\includegraphics[scale=0.43]{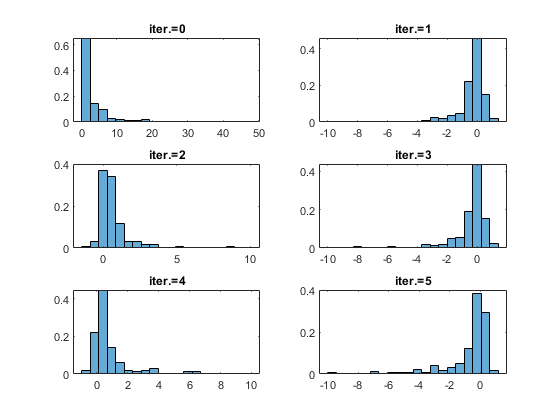}
		\includegraphics[scale=0.43]{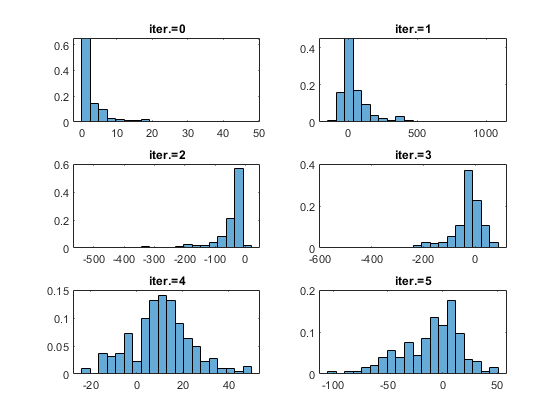}
		%
		%
	\end{center}
	\caption{The evolution of curvature of the ``Windsurfers'' network  under each of the studied Ricci flows. In descending order, from above: The Graph Forman-Ricci curvature flow, Full Forman-Ricci curvature flow and the Haantes-Ricci flow. Observe the extremely fast convergence towards a constant under the Graph Forman-Ricci curvature flow.}
\end{figure}

\begin{rem}
	As noted in \cite{WJS} a scalar (Yamabe) flow is also possible. However, here we concentrated on the Ricci curvature, that is to say edge-centric approach. 
\end{rem}



\section{Coarse Embedding  and an Application to SVM} \label{sec:Kernels}

A common theme in Geometry and in many of its applications (in some cases still in a non-deliberate manner) is the embedding, with least distortion, of a given structure/space, in a familiar, larger ambient space. For instance, in classical Geometry and Topology, this ambient space is usually  $\mathbb{R}^n$, for some $n$ large enough, whereas any metric space is isometrically embeddable in $l^\infty$. Also, more recently, embeddings of networks in the Hyperbolic Plane $\mathbb{H}^2$ or Space $\mathbb{H}^3$ have become quite common (see, e.g. \cite{Boguna2020} and the references therein).

It is therefore most natural to ask, not only if a {\it coarse embedding} (viewed as a metric measure space) of a weighted graph exists, but also whether there exists an ``automatic'' procedure to achieve such an embedding. By a coarse embedding of a metric space $(X,d)$ into another metric space $(Y,\rho)$, we mean a map $i: X \rightarrow Y$, such that there exist increasing, unbounded functions $\eta_1,\eta_2:\mathbb{R} \rightarrow \mathbb{R}$, such that 
\begin{equation} \label{eq:CoarseEmbed}
\eta_1(d(x_1,x_2)) \leq \rho(i(x_1),i(x_2)) \leq \eta_2(d(x_1,x_2))\,,
\end{equation}
for any $x_1,x_2 \in X$.

The ``automatic'' embedding method we alluded to above is one that is canonical in SVM techniques, and classical in Fourier Analysis and its various applications, in particular in Signal and Image Processing, namely that of {\it reproducing kernels}. Recall the following

\begin{defn}
Given a set $X$, {\it symmetric kernel} is a symmetric function  $k:X \times X  \rightarrow \mathbb{R}$, i.e. $k(x,y) = k(y,x)$, for any $x,y \in X$.

A kernel $k$ is said to have 
\begin{enumerate}
	\item {\it positive type} if the matrix $K_m = \{k(x_i,x_j)\}_{i,j=1}^m$ is positive semidefinite for all $m \in \mathbb{N}$;
	
	and
	
	\item {\it negative type} if the matrix $K_m = \{k(x_i,x_j)\}_{i,j=1}^m$ is negative semidefinite for all $m \in \mathbb{N}$;
\end{enumerate}
		
\end{defn}

Positive and negative type kernels are interrelated by the following classical result

\begin{prop}[Schoenberg's Lemma]
Let $k$ be a symmetric kernel on set $X$. Then the following statements are equivalent:
\begin{enumerate}
	\item The kernel $k$ is of negative type;
	
	\item The kernel $\kappa = {\rm exp}(-tk)$ is of positive type, for each $t > 0$.
\end{enumerate}
\end{prop}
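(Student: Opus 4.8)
The plan is to prove the two implications separately. Throughout I use the negative-type hypothesis only in its conditional form, namely $\sum_{i,j} c_i c_j\, k(x_i,x_j)\le 0$ for coefficients with $\sum_i c_i = 0$; this is precisely the form in which the property is produced by the limiting argument below and consumed by the Hilbert-space realization, so the two sides of the equivalence meet at exactly this level. The implication $(2)\Rightarrow(1)$ is the soft one and I would dispatch it first; the substance lies in $(1)\Rightarrow(2)$, which demands a geometric realization of $k$ followed by the positive-definiteness of a Gaussian kernel.

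For $(2)\Rightarrow(1)$, fix $x_1,\dots,x_n\in X$ and reals $c_1,\dots,c_n$ with $\sum_i c_i=0$. For every $t>0$ the positive type of $\kappa=\exp(-tk)$ gives $\sum_{i,j} c_i c_j\, e^{-t k(x_i,x_j)}\ge 0$, and since $\sum_i c_i=0$ the constant term may be subtracted without changing the sum, so that $\sum_{i,j} c_i c_j\big(e^{-t k(x_i,x_j)}-1\big)\ge 0$. Dividing by $t$ and letting $t\to 0^+$, and using $\big(e^{-ts}-1\big)/t\to -s$ inside the finite sum, yields $-\sum_{i,j} c_i c_j\, k(x_i,x_j)\ge 0$, i.e.\ $k$ is of negative type. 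This step uses nothing beyond the first-order expansion of the exponential.

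For $(1)\Rightarrow(2)$, I would first normalize via $k_0(x,y)=k(x,y)-\tfrac12 k(x,x)-\tfrac12 k(y,y)$, so that $k_0(x,x)=0$; the subtracted terms vanish against any coefficient system with $\sum_i c_i=0$, so $k_0$ is again of negative type, and $\exp(-tk_0)$ differs from $\kappa$ only by conjugation with the diagonal matrix $\mathrm{diag}\big(e^{\frac t2 k(x_i,x_i)}\big)$, which preserves positive semidefiniteness; hence it suffices to treat $k_0$, and I drop the subscript. Fixing a base point $x_0\in X$, I introduce
\[
g(x,y)=\tfrac12\big[k(x,x_0)+k(y,x_0)-k(x,y)\big].
\]
Applying the negative-type inequality to the augmented system $x_0,x_1,\dots,x_n$ with extra coefficient $c_0=-\sum_{i\ge 1}c_i$ (so that all coefficients now sum to zero) shows that $g$ is of positive type. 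The standard realization of a positive-type kernel as a Gram matrix then produces a Hilbert space $H$ and a map $\psi:X\to H$ with $g(x,y)=\langle\psi(x),\psi(y)\rangle$, and the identity $\|\psi(x)-\psi(y)\|^2=g(x,x)+g(y,y)-2g(x,y)$ reduces, using $k(x,x)=0$, to $k(x,y)=\|\psi(x)-\psi(y)\|^2$.

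It remains to show that the Gaussian kernel $e^{-t k(x,y)}=e^{-t\|\psi(x)-\psi(y)\|^2}$ is of positive type, and this is where I expect the only real obstacle. I would factor
\[
e^{-t\|\psi(x)-\psi(y)\|^2}=e^{-t\|\psi(x)\|^2}\,e^{-t\|\psi(y)\|^2}\,e^{\,2t\langle\psi(x),\psi(y)\rangle}
\]
and treat the three factors in turn. The kernel $\langle\psi(x),\psi(y)\rangle$ is of positive type by construction, so by the Schur product theorem every Hadamard power $\langle\psi(x),\psi(y)\rangle^{m}$ is of positive type; consequently the series $e^{2t\langle\psi(x),\psi(y)\rangle}=\sum_{m\ge 0}\frac{(2t)^m}{m!}\langle\psi(x),\psi(y)\rangle^{m}$, a limit of nonnegative combinations of positive-type kernels, is of positive type, the limit being harmless because the cone of positive semidefinite matrices is closed. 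Finally, multiplying by $f(x)f(y)$ with $f(x)=e^{-t\|\psi(x)\|^2}$ conjugates each matrix $K_m$ by the diagonal matrix $\mathrm{diag}\big(f(x_i)\big)$ and therefore preserves positive semidefiniteness. Combining the three factors shows $\kappa$ is of positive type for every $t>0$. The delicate points to check carefully are thus the Schur-product step, the closedness used to pass to the limit in the exponential series, and the bookkeeping in the diagonal normalization.
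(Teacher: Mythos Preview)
The paper does not supply its own proof of Schoenberg's Lemma; it simply refers the reader to Roe \cite{Roe}. Your argument is correct and is essentially the standard one found there: the limiting argument for $(2)\Rightarrow(1)$, the base-point trick turning $k$ into a positive-type kernel $g$, the Gram realization $k(x,y)=\|\psi(x)-\psi(y)\|^2$, and the Schur-product expansion of the Gaussian. One small point worth making explicit is that you are (rightly) using the \emph{conditional} formulation of negative type, i.e.\ $\sum_{i,j}c_ic_jk(x_i,x_j)\le 0$ only for $\sum_i c_i=0$; the paper's Definition states ``negative semidefinite'' without this restriction, which taken literally would make the equivalence fail (e.g.\ $k(x,y)=\|x-y\|^2$ satisfies (2) but its Gram matrices are indefinite). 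Your opening remark already flags this, and the conditional reading is exactly what Roe uses, so there is no gap.
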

(For a proof see, e.g. \cite{Roe}.)

From Schoenberg's Lemma and the fact that $k^\alpha$ can be written as 
\[
k^\alpha(x,y) = C\int_0^\infty(1 - e^{-t\kappa(x,y)})t^{-\alpha/2 - 1}dt\,
\]
where $C = C_\alpha$ is a positive constant, 
it follows (cf. \cite{Roe}) that following corollary hods: 

\begin{cor}
	Let $k \geq 0$ be a kernel of negative type on $X$. Then $\kappa = \kappa^\alpha$ is also of negative type, for any $0 < \alpha < 1$.
\end{cor}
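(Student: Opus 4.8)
The plan is to feed the integral representation of $k^\alpha$ appearing just before the statement into the very definition of negative type, reducing the whole question to the single fact, furnished by Schoenberg's Lemma, that $e^{-tk}$ is of positive type for every $t>0$. The first step is to record the (absolutely convergent) subordination identity that, for $s\ge 0$ and $0<\alpha<1$,
\[
s^\alpha = C_\alpha \int_0^\infty \bigl(1 - e^{-ts}\bigr)\, t^{-\alpha-1}\, dt, \qquad C_\alpha>0.
\]
Its convergence is clear: near $t=0$ the integrand behaves like $s\,t^{-\alpha}$, which is integrable because $\alpha<1$, while near $t=\infty$ it is dominated by $t^{-\alpha-1}$, integrable because $\alpha>0$. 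Substituting $s = k(x,y)$ — and here the hypothesis $k\ge 0$ is exactly what guarantees the identity applies pointwise — yields the representation of $k^\alpha(x,y)$ that we shall use.

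Next I would test $k^\alpha$ against an arbitrary finite configuration. Fix $m\in\mathbb{N}$, points $x_1,\dots,x_m\in X$, and reals $c_1,\dots,c_m$ with $\sum_i c_i = 0$, and compute
\[
\sum_{i,j=1}^m c_i c_j\, k^\alpha(x_i,x_j) = C_\alpha \int_0^\infty \Bigl[\, \sum_{i,j=1}^m c_i c_j \bigl(1 - e^{-t\,k(x_i,x_j)}\bigr)\Bigr] t^{-\alpha-1}\, dt,
\]
the interchange of the finite sum with the integral being immediate. The bracketed inner sum splits as $\bigl(\sum_i c_i\bigr)^2 - \sum_{i,j} c_i c_j\, e^{-t\,k(x_i,x_j)}$, and the first term vanishes thanks to the constraint $\sum_i c_i = 0$.

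It then remains to invoke Schoenberg's Lemma: since $k$ is of negative type, the kernel $e^{-tk}$ is of positive type for every $t>0$, so the matrix $\bigl(e^{-t\,k(x_i,x_j)}\bigr)_{i,j}$ is positive semidefinite and hence $\sum_{i,j} c_i c_j\, e^{-t\,k(x_i,x_j)} \ge 0$. Consequently the bracketed integrand is $\le 0$ for every $t>0$; since $C_\alpha>0$ and $t^{-\alpha-1}>0$, the entire integral is $\le 0$, giving $\sum_{i,j} c_i c_j\, k^\alpha(x_i,x_j)\le 0$ and so proving that $k^\alpha$ is of negative type.

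The step that genuinely carries the argument — and the point at which I would be most careful — is the cancellation of the $\bigl(\sum_i c_i\bigr)^2$ term, which forces one to use the conditionally-negative-definite formulation of negative type (test coefficients summing to zero) rather than an unconstrained negative semidefiniteness. Without that constraint the positive contribution of the constant kernel $1$ would not be absorbed and the conclusion could not be drawn. Everything else, namely the convergence of the integral and the harmless interchange of a finite sum with the integral, is routine.
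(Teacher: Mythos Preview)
Your argument is correct and follows exactly the route the paper sketches: the subordination integral for $s^\alpha$ together with Schoenberg's Lemma, with the crucial detail that the test coefficients must satisfy $\sum_i c_i=0$ made explicit. (Your exponent $t^{-\alpha-1}$ is the standard one; the paper's $t^{-\alpha/2-1}$ is a slip.)
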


\begin{rem}
	An (effectively coarse) embedding method for networks as well as higher dimensional spaces was proposed in \cite{Sa17}. While the approach suggested therein applies to more general spaces than networks and, the embedding space is the familiar $\mathbb{R}^n$, it still is less than intuitive, since it makes appeal to a family of quasi-metrics. While this approach allows for the the network to be studied at many scales, it is also more complicated then the one adopted here (and based on the path degree metric). Furthermore, the method introduced herein has also the advantage of coming in conjunction with a reproducing kernel, that allows for the automation required in SVM related applications. 
	Therefore, the corollary above shows that a way of studying networks at many scales, akin to the one in \cite{Sa17}, exists also in the coarse embedding (kernel) approach.
\end{rem}

Since the operators $\Box_1$ and $B_1$ are symmetric as functions of the end nodes $u,v$ of an edge $e = (u,v)$, they define (in analogy to the classical Laplacian) reproducing kernels $k_{\Box}, k_B$ on any given network. Moreover, since by its very construction/definition $B_p$ is a positive semidefinite, it follows that $k_p$ is also positive semidefinite (i.e. a ``classical'' reproducing kernel). By a direct application of a  classical result (for a proof, see, e.g. \cite{Roe}, Theorem 11.15), we get 

\begin{thm}
	Let $k$ be a symmetric kernel on $X$. Then,
	
	\begin{enumerate}
		\item If $k$ is of positive type, then there exists a map $\varPhi:X \rightarrow \mathcal{H}$, where $\mathcal{H}$ is a real Hilbert space, such that $k(x,y) = <\varPhi(x),\varPhi(y)>$, for any $x,y \in X$.
		
		\item If $k$ is of negative type and, furthermore, $k(x,x) = 0$ for any $x \in X$, then there exists a map $\varPhi:X \rightarrow \mathcal{H}$, where $\mathcal{H}$ is a real Hilbert space, such that $k(x,y) = ||\varPhi(x) - \varPhi(y)||^2$, for any $x,y \in X$.
	\end{enumerate}
\end{thm}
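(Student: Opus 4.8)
The plan is to establish the two parts by the standard reproducing-kernel (feature-map) construction, obtaining part (2) from part (1) by centering the kernel at a base point.

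For part (1), I would build $\mathcal{H}$ directly from $k$. Let $V$ be the real vector space spanned by the functions $k(\cdot,x)$, $x \in X$, i.e. the finite sums $f = \sum_i c_i\, k(\cdot, x_i)$, and define on it the symmetric bilinear form
\[
\langle f, g\rangle := \sum_{i,j} c_i d_j\, k(x_i, y_j), \qquad g = \sum_j d_j\, k(\cdot, y_j).
\]
Because $\langle f, g\rangle = \sum_j d_j\, f(y_j) = \sum_i c_i\, g(x_i)$, the form depends only on $f$ and $g$ as functions, so it is well defined; symmetry is immediate from $k(x,y)=k(y,x)$. The positive-type hypothesis is exactly the statement that the quadratic form $\langle f, f\rangle = \sum_{i,j} c_i c_j\, k(x_i,x_j)$ is nonnegative, so $\langle\cdot,\cdot\rangle$ is positive semidefinite. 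Quotienting $V$ by its null space $\{f : \langle f,f\rangle = 0\}$ (a subspace, since Cauchy--Schwarz holds for any positive semidefinite form) gives a genuine inner-product space, and I take $\mathcal{H}$ to be its completion. Defining $\varPhi(x)$ to be the class of $k(\cdot,x)$ in $\mathcal{H}$, one reads off $\langle \varPhi(x),\varPhi(y)\rangle = k(x,y)$ from the definition of the form.

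For part (2), I would reduce to part (1) by centering at a fixed $x_0 \in X$. Set
\[
\tilde{k}(x,y) := \tfrac12\bigl(k(x,x_0) + k(y,x_0) - k(x,y)\bigr).
\]
The key lemma is that $\tilde{k}$ is of positive type. To prove it, take $x_1,\dots,x_m$ and reals $c_1,\dots,c_m$, put $S = \sum_i c_i$, and enlarge the family by adjoining $x_0$ with coefficient $c_0 = -S$, so that the enlarged coefficients sum to zero. Applying the negative-type hypothesis to this enlarged family and using $k(x_0,x_0)=0$, the diagonal and cross terms rearrange to give precisely $\sum_{i,j} c_i c_j\, \tilde{k}(x_i,x_j) \ge 0$. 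Thus $\tilde{k}$ is positive type, and part (1) supplies $\psi:X \to \mathcal{H}$ with $\tilde{k}(x,y) = \langle \psi(x),\psi(y)\rangle$. Expanding $\|\psi(x)-\psi(y)\|^2 = \tilde{k}(x,x) + \tilde{k}(y,y) - 2\tilde{k}(x,y)$ and substituting $\tilde{k}(x,x) = k(x,x_0)$ and $\tilde{k}(y,y) = k(y,x_0)$ (where $k(x,x)=0$ is used) collapses the right-hand side to $k(x,y)$; setting $\varPhi := \psi$ finishes the argument.

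I expect the main obstacle to be the positive-type lemma for $\tilde{k}$. This is the only place where the hypotheses genuinely enter: the negative-type condition must be invoked in its ``coefficients summing to zero'' form, the adjoined base point is what converts it into an unconstrained nonnegativity statement, and the normalization $k(x,x)=0$ is needed both there and in the final collapse. The construction in part (1), by contrast, is routine once the semidefinite form is set up, the only care being the standard passage to the quotient and the metric completion.
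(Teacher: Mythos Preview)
Your argument is correct and is the standard one: the RKHS/GNS construction for part (1), and the Schoenberg ``centering at a base point'' reduction for part (2). Note that the paper does not give its own proof of this statement; it quotes it as a classical result and refers to \cite{Roe}, Theorem 11.15, whose proof is essentially the one you outline. One small remark: you correctly invoke the negative-type hypothesis in its conditional form (test vectors with $\sum_i c_i = 0$), which is indeed the intended meaning here (and the one in \cite{Roe}), even though the paper's definition is phrased loosely as ``negative semidefinite''; read literally, the unconditional version together with $k(x,x)=0$ would force $k\equiv 0$.
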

 %

Moreover, ${\bf F}$ and ${\rm Ric}_F$ are both symmetric functions (again viewed as acting on pairs of nodes defining edges), thus kernels $k_{\bf F}$ and $k_{{\rm Ric}_F}$ can be defined.
However, neither version of Forman's Ricci curvature  is positive, therefore a mapping into a real Hilbert space, as for the Laplacians, is not possible for these curvature-based kernels. In fact, ${\bf F}$ is {\em not} negative only if both its end nodes have degree 2, that is only the degenerate case of cycles and graph ``spurious'' vertices of degree 2, i.e graph subdivisions {\it homeomorphic}
to a ``good'' graph can have edges of non-negative curvature. It can, therefore, be surmised that proper networks have pure Forman curvature  ${\bf F} < 0$. (Note that this does not hold for ${\rm Ric}_F$.)

Unfortunately, part (2) of the theorem above (the one regarding negative type operators) does not hold, since it requires that $k(x,x) = 0$, for all $x \in X$, which, of course, does not hold neither for $\Box_1$, nor for $B_1$. However, one can still map the kernels $k_{\Box}, k_B$ to a Hilbert space precisely as above, after modifying them into fitting related positive operators, by putting  $k^*_{\Box} = e^{-k_{\Box}}; \:\: k^*_B = e^{-k_B}$.\footnote{In fact, for every $t > 0$, the kernels $k^{*,t}_{\Box}(x,y) = e^{-tk_{\Box}(x,y)};$,  $k^{*,^t}_B(x,y) = e^{-tk_B(x,y)}$ are of positive type (see \cite{Roe}, Proposition 11.12.}

Moreover (and perhaps more important in our context) there exists a coarse embedding of any (finite) network into a (real) Hilbert space. This follows from the fact that, for instance, $e^{-k_F}$ is a positive kernel, and from the fact that 
the kernel $k_F$, is {\it effective} i.e. the edges $\{e = (x,y)\,|\, |k(e)| < K\}$, $K > 0$ generate the {\it coarse structure} of the network (see \cite{Roe} for technical details), which, given in the case at hand, of finite networks, can naturally to be taken as the  {\it discrete coarse structure} generated by sets that contain only a finite number of points (nodes) off the diagonal. Therefore we can apply the following result:

\begin{thm}[\cite{Roe}, Theorem 11.16.]
	Let $X$ be a coarse space. Then the following statements are equivalent:
	\begin{enumerate}
		\item $X$ can be coarsely embedded into a Hilbert space.
		
		\item There exists an effective negative type kernel on $X$. 
	\end{enumerate}
\end{thm}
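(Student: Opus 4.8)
The plan is to prove the two implications separately, in each case producing the witnessing object explicitly and then verifying the two quantitative bounds that separate a genuine coarse embedding from an arbitrary map. Throughout I interpret the coarse structure via the (degree) path metric, so that the controlled sets are exactly those $E \subseteq X \times X$ on which $d$ is bounded; recall that a kernel $k$ is \emph{effective} precisely when (a) $k$ is bounded on every controlled set, and (b) each sublevel set $\{(x,y) : k(x,y) \le R\}$ is controlled. The diagonal $\Delta$ is always controlled, a fact I will use silently below.

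For the implication $(2) \Rightarrow (1)$ I would begin from an effective negative type kernel $k$. Replacing $k$ by $\tilde{k}(x,y) = k(x,y) - \tfrac12\big(k(x,x)+k(y,y)\big)$ I may assume $k(x,x)=0$: the cancellation $\sum_i c_i = 0$ shows $\tilde{k}$ is again of negative type, and since $k$ is bounded on $\Delta$ one checks that $\tilde{k}$ remains effective. Then part (2) of the Hilbert-space representation theorem stated above furnishes a map $\Phi : X \to \mathcal{H}$ with $\|\Phi(x) - \Phi(y)\|^2 = \tilde{k}(x,y)$, and it remains only to convert effectiveness into the defining inequalities of \eqref{eq:CoarseEmbed}. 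I set $\eta_2(r) = \sup\{\sqrt{\tilde{k}(x,y)} : d(x,y) \le r\}$, finite by (a) and non-decreasing, and $\eta_1(r) = \inf\{\sqrt{\tilde{k}(x,y)} : d(x,y) \ge r\}$. Condition (b) forces $\eta_1(r) \to \infty$: were $\eta_1$ bounded by $M$, some sublevel set $\{\tilde{k} \le (M+1)^2\}$ would contain pairs of arbitrarily large $d$-distance, contradicting that it is controlled. With these gauges $\eta_1(d(x,y)) \le \|\Phi(x)-\Phi(y)\| \le \eta_2(d(x,y))$, which is exactly a coarse embedding.

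For the converse $(1) \Rightarrow (2)$, given a coarse embedding $f : X \to \mathcal{H}$ I set $k(x,y) = \|f(x) - f(y)\|^2$. That $k$ is of negative type is the classical Hilbert-space computation: for scalars $c_i$ with $\sum_i c_i = 0$ the norm-square terms cancel and one is left with $\sum_{i,j} c_i c_j k(x_i,x_j) = -2\,\big\|\sum_i c_i f(x_i)\big\|^2 \le 0$, while $k(x,x)=0$ holds automatically. Effectiveness is then read directly off the coarse bounds on $f$: condition (a) holds because $d(x,y) \le R$ forces $\sqrt{k(x,y)} = \|f(x)-f(y)\| \le \eta_2(R)$, and condition (b) holds because $k(x,y) \le R$ forces $\eta_1(d(x,y)) \le \sqrt{R}$, whence $d(x,y)$ is bounded in terms of $R$ by the unboundedness and monotonicity of $\eta_1$. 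Hence $k$ is an effective negative type kernel.

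The main obstacle I anticipate is not the Schoenberg/GNS machinery, which is supplied verbatim by Schoenberg's Lemma and by the representation theorem above, but the careful translation between the metric gauges $\eta_1, \eta_2$ and the abstract effectiveness conditions, together with the preliminary normalization to $k(x,x)=0$. One must check that this normalization preserves both negative type and the two effectiveness conditions (using boundedness of $k$ on $\Delta$), and one must treat the ``inverse'' of $\eta_1$ invoked in $(1)\Rightarrow(2)$ with care, since $\eta_1$ is merely non-decreasing and unbounded rather than strictly increasing; this is handled by passing to the generalized inverse $\eta_1^{-1}(s) = \sup\{r : \eta_1(r) \le s\}$.
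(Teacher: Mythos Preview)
The paper does not give its own proof of this statement: it is quoted verbatim as Theorem 11.16 of Roe and invoked as a black box, so there is no ``paper's proof'' to compare against beyond the reference itself. Your argument is essentially the standard one from Roe --- realize a normalized negative type kernel as $\|\Phi(x)-\Phi(y)\|^2$ via the GNS/Schoenberg representation in one direction, and take $k(x,y)=\|f(x)-f(y)\|^2$ in the other --- and it is correct.

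Two small remarks. First, you explicitly specialize to the bounded coarse structure coming from a metric, which is all the paper needs (finite networks with the path metric), but the theorem as stated is for an arbitrary coarse space; in that generality one replaces your gauges $\eta_1,\eta_2$ indexed by $r\in\mathbb{R}$ with families indexed by controlled sets, and the argument goes through unchanged. Second, the paper's definition of coarse embedding (\ref{eq:CoarseEmbed}) asks for \emph{increasing, unbounded} $\eta_1,\eta_2$, whereas your $\eta_2$ is a priori only non-decreasing and possibly bounded; this is harmless since one may replace $\eta_2(r)$ by $\eta_2(r)+r$ and perturb $\eta_1$ to be strictly increasing without affecting the inequalities. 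Neither point is a gap in the mathematics.
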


In fact, the observation above can be strengthened in more than one way: On the one hand, the result can be applied to any of the kernels above, not only the one considered above. Furthermore, due to finiteness, one can relax the effectiveness condition by dispensing with the absolute value in the defining inequality. On the other hand, the Forman-Ricci curvature kernels are effective even if one considers $\varepsilon$-nets obtained from the geometric sampling of non-compact Riemannian manifolds with Ricci curvature bounded from below, given that the sampling procedure is essentially the same as for the compact ones (see \cite{Kanai} for the classical case and \cite{Sa11-1} for the generalization to metric measure spaces). 
The only difference between the compact case and the one at hand is that one has to relax somewhat the coarseness isometry definition and the resulting $\varepsilon$-net, endowed with the combinatorial metric (i.e. with edge lengths $\equiv 1$), will be only {\it roughly} isometric to the given manifold, where rough isometry is defined as follows:

\begin{defn}[Rough isometry]
	Let $(X,d)$ and $(Y,\delta)$ be two metric spaces, and let $f:X \rightarrow Y$ (not necessarily continuous).
	$f$ is called a {\em rough isometry} iff
	
	\begin{enumerate}
		\item There exist $a \geq 1$ and $b > 0$, such that
		\[\frac{1}{a}d(x_1,x_2) - b \leq \delta(f(x_1),f(x_2)) \leq ad(x_1,x_2) + b\,,\]
		%
		\item there exists $\varepsilon_1 > 0$ such that
		\[\bigcup_{x \in X}{B(f(x),\varepsilon_1)}= Y;\]
		(that is $f$ is $\varepsilon_1$-{\it full}.)
	\end{enumerate}
	
\end{defn}


It follows that all types of kernels based on Forman's discretization of the Bochner-Weitzenb\"{o}ck may be applied in the variety of SVM problems where kernels are usually employed, such as clustering and classification. 
In particular, one can compute the so called {\it kernel distance} \cite{JKPV}, \cite{PV}: 

\begin{defn}
Given a {\it similarity function} ({\it kernel}) $K:\mathbf{R}^d \times \mathbf{R}^d \rightarrow \mathbf{R}$, such that $K(p,p) = 1$, for any $p \in \mathbb{R}^d$, and given sets $P,Q \subset \mathbf{R}^d$, the {\it kernel distance} $D_K(P,Q)$ is defined as 
\begin{equation}
D_K(P,Q) = \sqrt{\sum_{p \in P}\sum_{p' \in P}K(p,p') - 2\sum_{p \in P}\sum_{q \in Q}K(p,q) + \sum_{q \in Q}\sum_{q' \in Q}K(q,q')}\,.
\end{equation}
\end{defn}

\begin{rem}
	$D_K(P,Q)$ is not a metric if $K$ is not positive definite. 
\end{rem}

Note that in the special case $P = \{p\}, Q = \{q\}$, we obtain $D^2_K(P,Q) = 2(1 - K(p,q))$, thus $1 - K(p,q)$ can be viewed as a proper squared distance, since $1 - K(p,p) = 0$.

\begin{rem}
	The quantity $\kappa(P,Q) = \sum_{p \in P}\sum_{q \in Q}K(p,q)$ is called the {\it cross-similarity} (of $P$ and $Q$), thus $D_K(P,Q)$ can be expressed, more simply, in terms of the cross similarity, rather then in means of the kernel as $\kappa(P,P) - 2\kappa(P,Q) + \kappa(Q,Q)$.
\end{rem}

A first application of the kernel distance is in the visualization of kernel spaces and data in general \cite{SMcC}, \cite{JKPV}. The embedding method employed here is the so called {\it mutidimensional scaling} ({\it MDS}) one (see, e.g. \cite{BG}). 
More precisely, given $N$ points $p_1,\ldots,p_N$, we can approximate their position in a kernel space $\mathcal{H} \subset \mathbb{R}^d$ by $y_1,\ldots,y_N \in \mathbb{R}^d$, by minimizing the {\it cost-function}
\begin{equation}
J = \sum_{i = 1}^N\sum_{j > i}^N(||y_i - y_j|| - D_{ij})^2\,;
\end{equation}
where $D_{ij} = D_K(p_i,p_j)$.

We illustrate this application below, using the Graph Forman-Ricci curvature, on two real-life networks, namely the simple, illustrative ``Kangaroo'' network, and on the larger ``Les Mis\'{e}rables'' one. As the second example proves, this visualization method is quite efficient at distinguishing various clusters in a large network, due to the high degree of separation that higher dimensional spaces afford. In these examples, to numerically determine the minimum of the {\it cost-function} $J$ we made appeal to the classical {\it Douglas-Rachford algorithm} \cite{DR}. We have illustrated these examples in Figures 13 and 14 below.

\begin{figure}
	\includegraphics[scale=0.17]{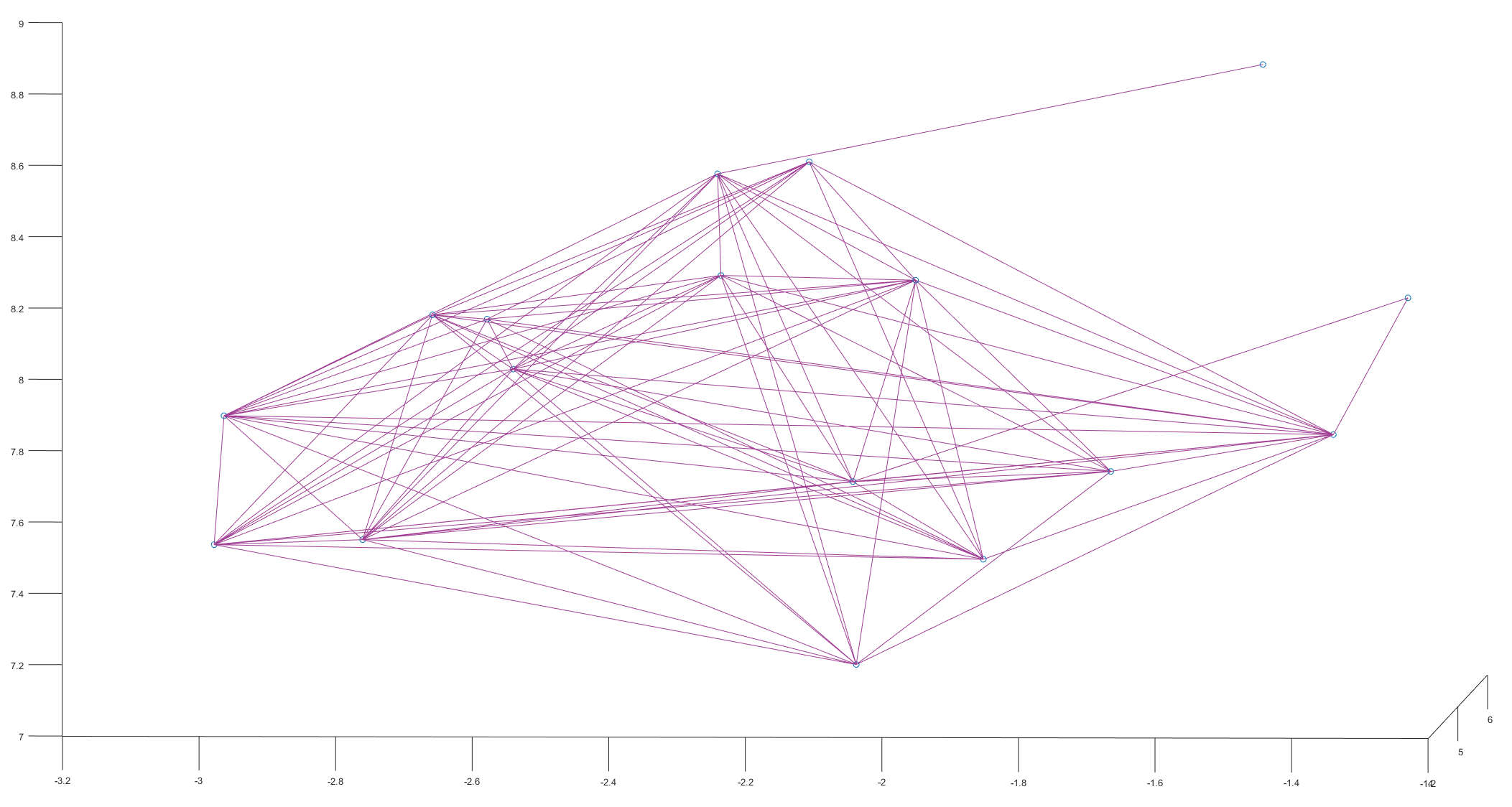}
	\includegraphics[scale=0.17]{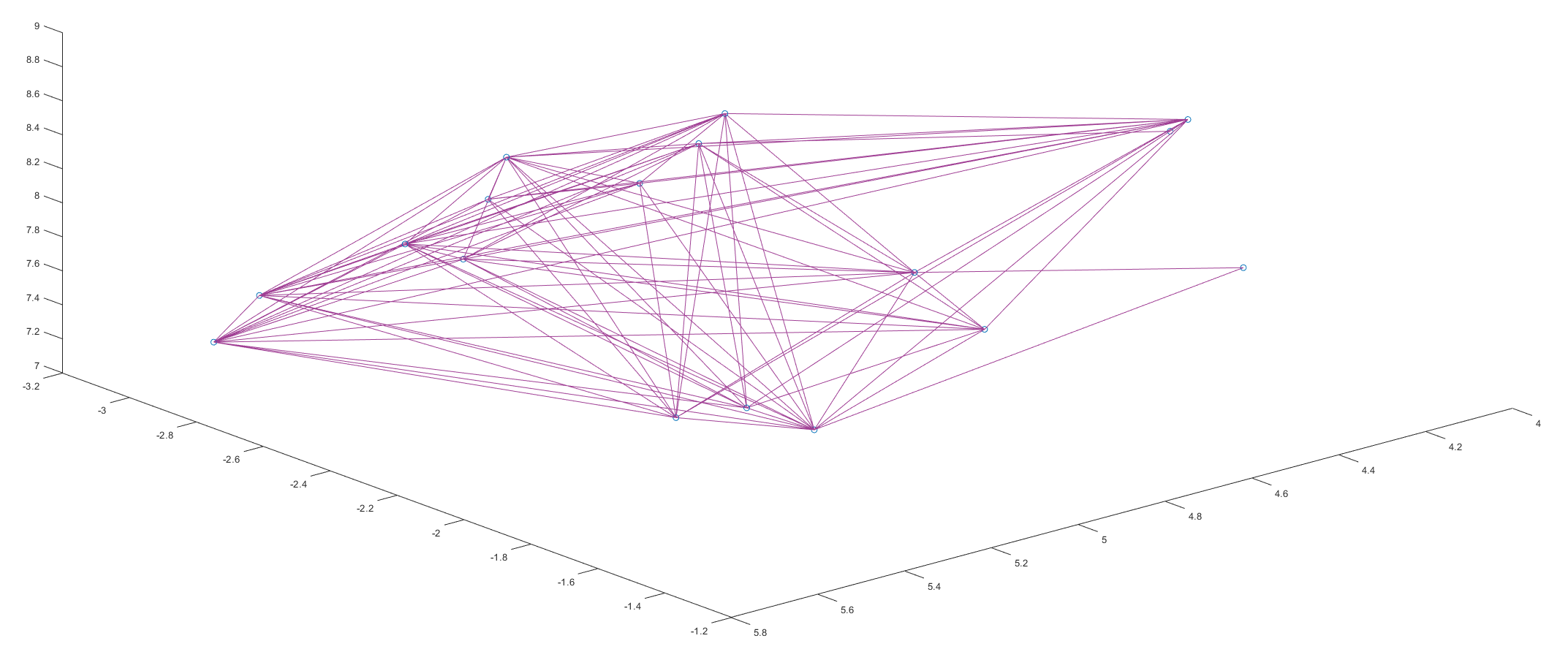}
	\caption{Two views of the ``Kangaroo'' Graph Forman-Ricci curvature derived, kernel-based,  embedding in $\mathbb{R}^3$ which reveal that the network essentially consist from one large cluster. The the attained minimum for the cost function in this case is 
		$J = 23.841747$.}
\end{figure}

\begin{figure}
	\includegraphics[scale=0.17]{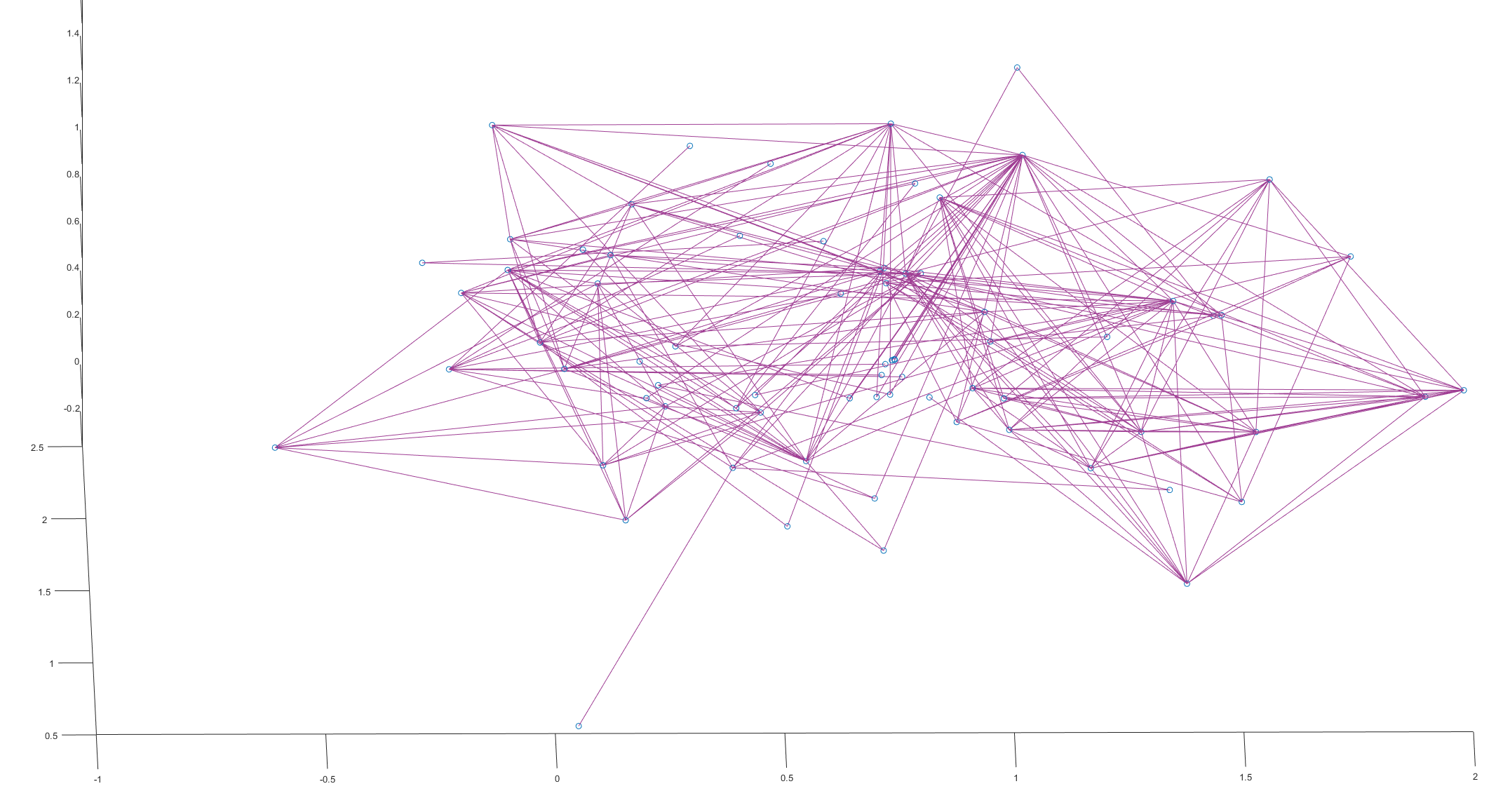}
	\includegraphics[scale=0.17]{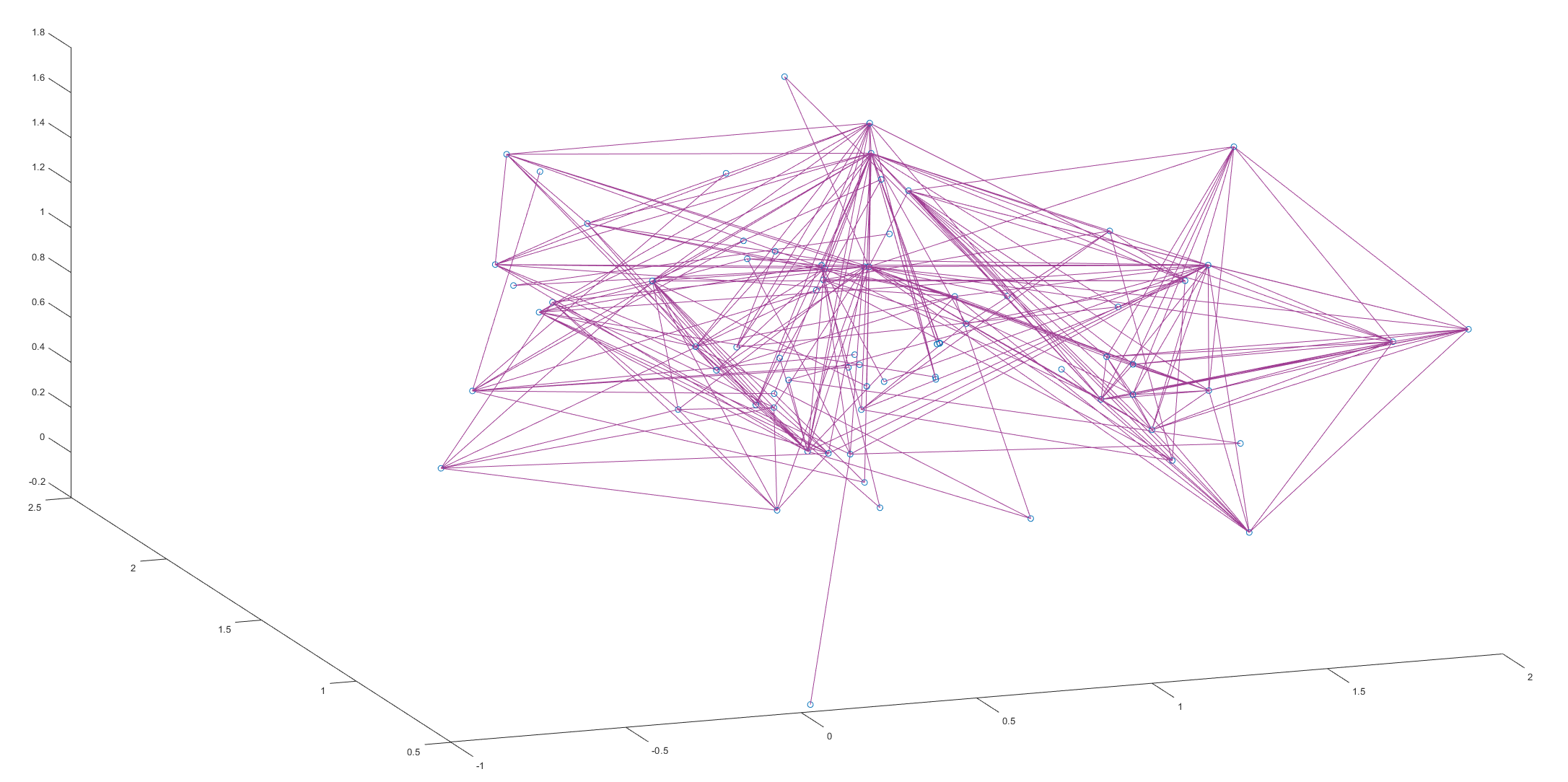}
	\caption{Two views of the Graph Forman-Ricci curvature derived, kernel-based embedding of the ``Les Mis\'{e}rables'' characters network. Here the attained minimum for the cost function is $J = 42.264625$. Note that the kernel-based embedding into $\mathbb{R}^3$ clearly 
    distinguishes the clusters around each of the main characters in Hugo's novel, connected by the relations between them.}
\end{figure}

Furthermore, we have implemented the $\square_1$ based embedding. Perhaps contrary to the common wisdom, and thus somewhat unexpected, the Graph Forman-Ricci curvature renders a far better separation into clusters than the considered Laplacian. -- See Figure 16. 

\begin{figure}
	\includegraphics[scale=0.17]{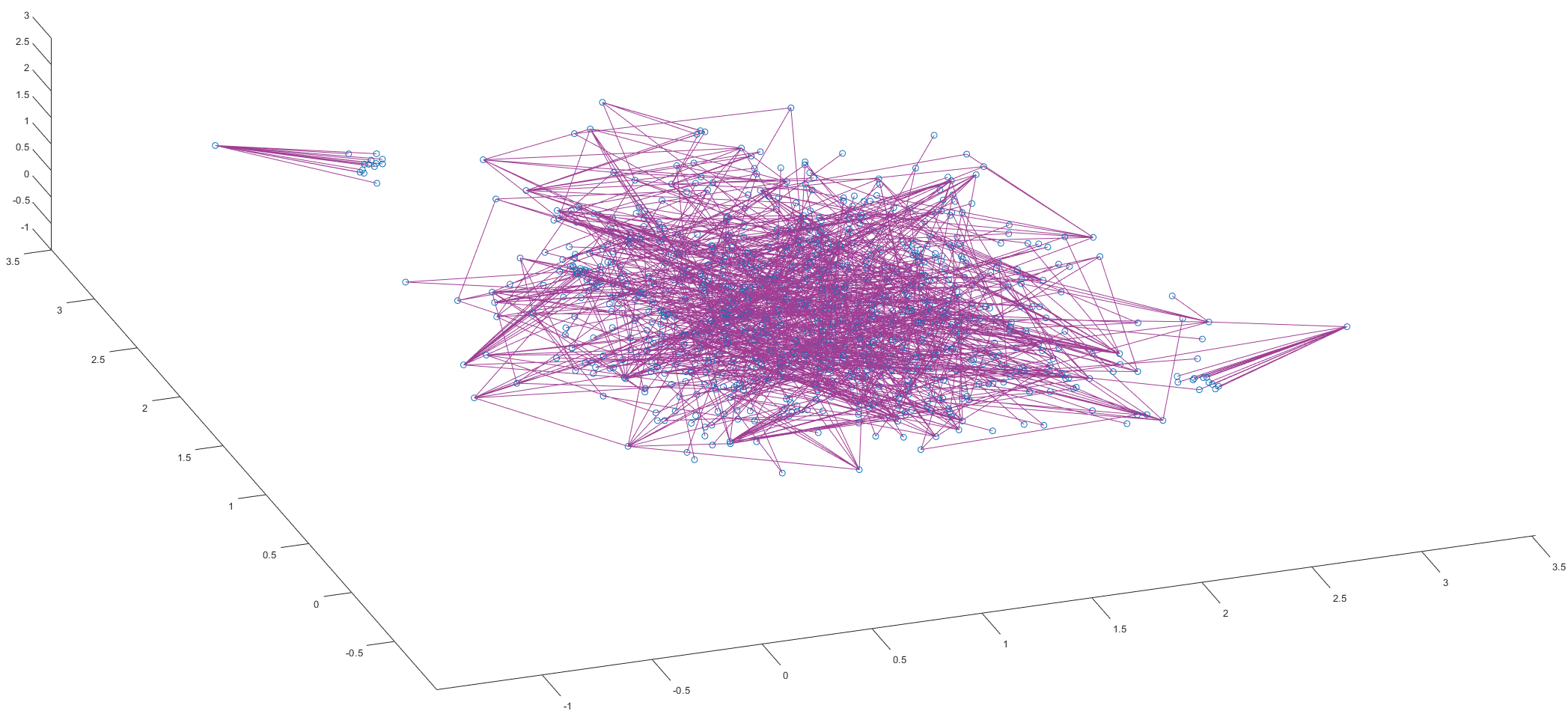}
	\includegraphics[scale=0.17]{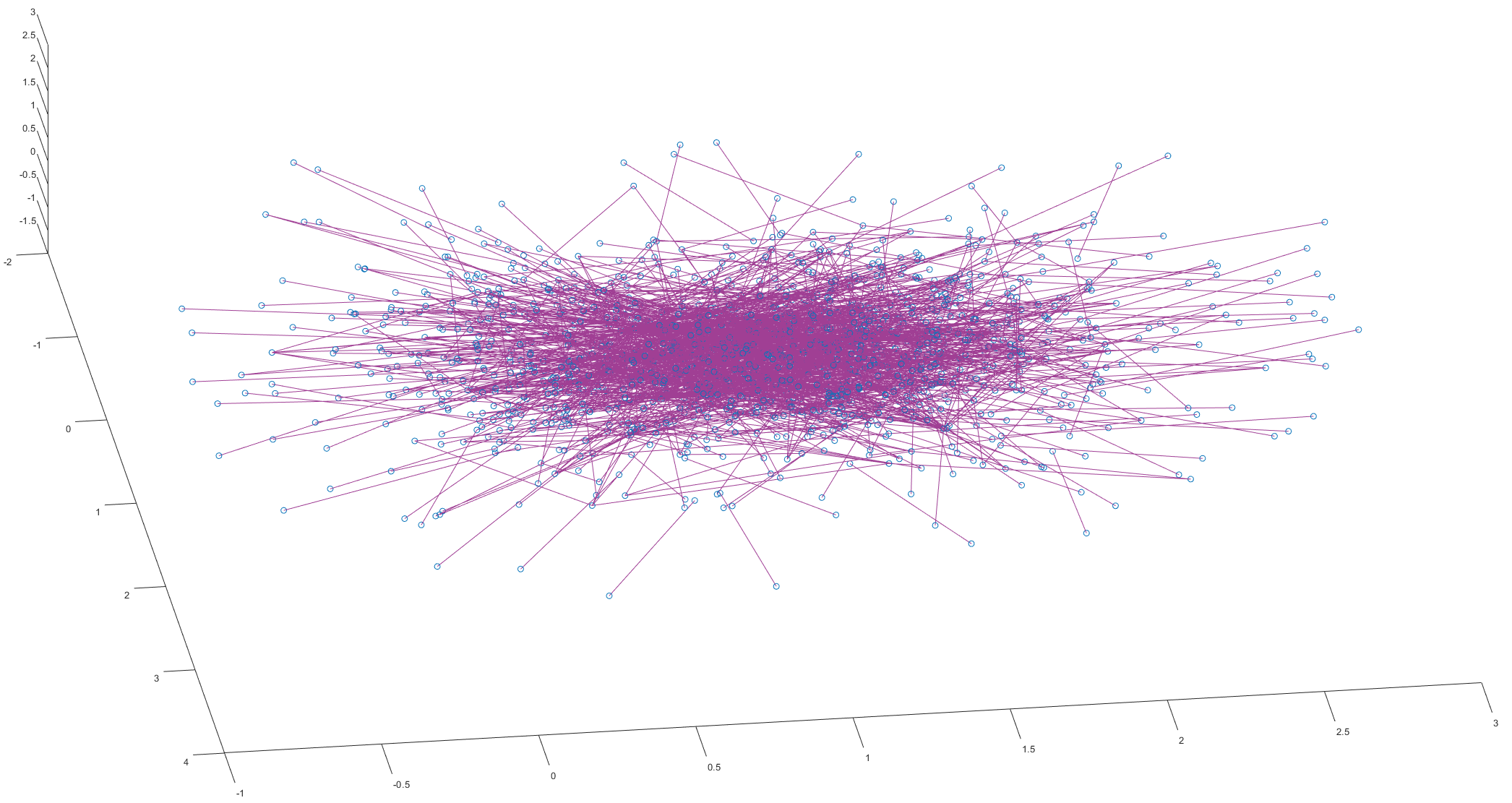}
	\caption{Graph Forman-Ricci (above) vs. $\square_1$ (below) kernel-based embedding of the ``C. Elegans'' network \cite{Cho++}. Note the far better separation properties of the curvature based embedding. This ability has as a penalty a higher minimum of the cost function $J$, namely 404.342321 for the curvature-based kernel, as compared to 283.640003 for the Laplacian-based kernel.}
\end{figure}

The kernel embedding method not only is useful in the visualization of networks, it also (and perhaps more importantly) allows us to embed large complex networks into the familiar Euclidean space. Consequently one can employ well established and intuitive sampling methods of data points and sets in the usual ambient space.



\section{From Manifold Sampling to Network Sampling}\label{section:manif->ntwks}

Given that the discretizations of sampled manifolds are graphs (moreover, also named $\varepsilon$-nets), conducts one to pose the following natural question, namely weather there is a connection between the curvature bounds of the given manifold and the discrete curvature of the resulting graph, more specifically their Forman-Ricci curvature. The answer to this question is positive, as we shall demonstrate below. 

\begin{rem}
This fact , in conjunction with the result in \cite{Sa-Morse} on the connection between classical (smooth) curvature of a given Riemannian manifold and the combinatorial curvature of an approximating $PL$ manifold, significantly improves our understanding of the interrelations between the curvature of an underlying smooth manifold and its various discretizations. 
\end{rem}

The proof follows easily from the following essential properties of efficient packings that are used in the proof of the Grove and Peterson's result mentioned in Section \ref{section: background}:

Let $M^n$ be a an $n$-dimensional Riemannian manifold, 
such that ${\rm Ric}_M \geq (n-1)k$,
and let $D$ denote the
upper bound of the diameter of $M^n$. Then the following lemmas hold:

\begin{lem}[\cite{GP}, Lemma 3.2] \label{lem:GP1}
	There exists $n_1 = n_1(n,k,D)$, such that if
	$\{p_1,\ldots,p_{n_0}\}$ is a minimal $\varepsilon$-net on $M^n$, then $n_0
	\leq n_1$.
\end{lem}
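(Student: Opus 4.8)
The plan is to exploit the two defining features of a minimal $\varepsilon$-net — the disjointness of the half-radius balls $\beta^n(p_k,\varepsilon/2)$ from property (2) of Definition \ref{def:epsilon-nets}, and the diameter bound ${\rm diam}\,M^n \leq D$ — together with the Bishop--Gromov volume comparison theorem, which is exactly where the hypothesis ${\rm Ric}_M \geq (n-1)k$ enters. Let ${\rm Vol}_k(r)$ denote the volume of a geodesic ball of radius $r$ in the simply connected space form of constant sectional curvature $k$; this is an explicit function of $n$, $k$ and $r$. Bishop--Gromov asserts that for every $p \in M^n$ the ratio ${\rm Vol}(\beta^n(p,r))/{\rm Vol}_k(r)$ is non-increasing in $r$.

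First I would observe that, since the diameter is at most $D$, the ball $\beta^n(p_k,D)$ exhausts all of $M^n$ for every center $p_k$, so ${\rm Vol}(\beta^n(p_k,D)) = {\rm Vol}(M^n)$. Applying the monotonicity of Bishop--Gromov between the radii $\varepsilon/2$ and $D$ then yields a lower bound on the volume of each small ball, namely
\[
{\rm Vol}(\beta^n(p_k,\varepsilon/2)) \;\geq\; {\rm Vol}(M^n)\,\frac{{\rm Vol}_k(\varepsilon/2)}{{\rm Vol}_k(D)}\,.
\]

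Next I would sum this over $k = 1,\ldots,n_0$. Because the balls $\beta^n(p_k,\varepsilon/2)$ are pairwise disjoint, the left-hand sides add up to at most ${\rm Vol}(M^n)$, so
\[
{\rm Vol}(M^n) \;\geq\; n_0\,{\rm Vol}(M^n)\,\frac{{\rm Vol}_k(\varepsilon/2)}{{\rm Vol}_k(D)}\,.
\]
Dividing through by the strictly positive volume ${\rm Vol}(M^n)$ cancels it entirely, leaving $n_0 \leq {\rm Vol}_k(D)/{\rm Vol}_k(\varepsilon/2)$, a constant depending only on $n$, $k$, $D$ and the fixed scale $\varepsilon$. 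Setting $n_1 := {\rm Vol}_k(D)/{\rm Vol}_k(\varepsilon/2)$ completes the argument.

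The one genuinely delicate point — and the feature that makes the result stronger than a naive packing count — is that the final bound involves no lower bound on ${\rm Vol}(M^n)$ whatsoever. This independence is precisely what the cancellation of ${\rm Vol}(M^n)$ delivers, and it hinges on using the diameter bound to rewrite the total volume as the volume of a ball centered at each net point \emph{before} invoking the comparison. The only remaining care is bookkeeping in the comparison when $k > 0$, where ${\rm Vol}_k(D)$ must be read with $D$ not exceeding the diameter $\pi/\sqrt{k}$ of the model sphere; this is automatic here, since under ${\rm Ric}_M \geq (n-1)k > 0$ the Bonnet--Myers theorem already forces ${\rm diam}\,M^n \leq \pi/\sqrt{k}$.
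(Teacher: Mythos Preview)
Your argument is correct and is precisely the standard Bishop--Gromov packing count that Grove and Petersen use; note, however, that the present paper does not itself supply a proof of this lemma but merely cites it from \cite{GP}, so there is nothing further to compare. One small remark: the bound you obtain, $n_1 = {\rm Vol}_k(D)/{\rm Vol}_k(\varepsilon/2)$, visibly depends on the scale $\varepsilon$ as well as on $n,k,D$, which is as it should be (in \cite{GP} the scale is fixed in terms of the other data, whence the stated dependence).
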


\begin{lem}[\cite{GP}, Lemma 3.3] \label{lem:GP2}
	There exists $n_2 = n_2(n,k,D)$, such that for any $x \in M^n$,
	$\left|\{j \,|\, j = 1,\ldots,n_0\; {\rm and}\;
	\beta^n(x,\varepsilon) \cap \beta^n(p_j,\varepsilon) \neq
	\emptyset\}\right| \leq n_2$, for any minimal $\varepsilon$-net
	$\{p_1,\ldots,p_{n_0}\}$.
\end{lem}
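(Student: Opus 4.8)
The plan is to prove Lemma \ref{lem:GP2} by a standard volume--comparison packing argument, using the two defining properties of a minimal $\varepsilon$-net from Definition \ref{def:epsilon-nets} together with the Bishop--Gromov inequality, which is available precisely because ${\rm Ric}_M \geq (n-1)k$. First I would convert the intersection hypothesis into a metric one: if $\beta^n(x,\varepsilon) \cap \beta^n(p_j,\varepsilon) \neq \emptyset$, the triangle inequality gives $d(x,p_j) < 2\varepsilon$, so every index $j$ contributing to the count has its center $p_j$ inside $\beta^n(x,2\varepsilon)$. Writing $N$ for the number of such indices, the task is to bound $N$ independently of $x$ and of the particular net.

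The geometric input is that, by condition (2) of Definition \ref{def:epsilon-nets}, the balls $\beta^n(p_j,\varepsilon/2)$ are pairwise disjoint. Since each center obeys $d(x,p_j) < 2\varepsilon$, each ball $\beta^n(p_j,\varepsilon/2)$ is contained in $\beta^n(x,5\varepsilon/2)$, while conversely $\beta^n(x,5\varepsilon/2) \subseteq \beta^n(p_j,9\varepsilon/2)$. Summing volumes over the $N$ disjoint balls and comparing with the common container gives
\[
N \cdot \min_j {\rm Vol}\big(\beta^n(p_j,\varepsilon/2)\big) \leq \sum_j {\rm Vol}\big(\beta^n(p_j,\varepsilon/2)\big) \leq {\rm Vol}\big(\beta^n(x,5\varepsilon/2)\big)\,.
\]

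To upgrade this into an $\varepsilon$-uniform estimate I would invoke Bishop--Gromov relative volume comparison: for each fixed $p_j$ the ratio ${\rm Vol}(\beta^n(p_j,r))/V_k(r)$ is non-increasing in $r$, where $V_k(r)$ is the volume of an $r$-ball in the simply connected model space of constant curvature $k$. Applied with radii $\varepsilon/2$ and $9\varepsilon/2$, and then using the containment $\beta^n(x,5\varepsilon/2)\subseteq\beta^n(p_j,9\varepsilon/2)$, this yields ${\rm Vol}(\beta^n(p_j,\varepsilon/2)) \geq \tfrac{V_k(\varepsilon/2)}{V_k(9\varepsilon/2)}\,{\rm Vol}(\beta^n(x,5\varepsilon/2))$. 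Substituting and cancelling the positive factor ${\rm Vol}(\beta^n(x,5\varepsilon/2))$ leaves
\[
N \leq \frac{V_k(9\varepsilon/2)}{V_k(\varepsilon/2)}\,,
\]
a quantity that no longer refers to the manifold at all.

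The main obstacle --- and the only place the diameter bound $D$ enters --- is removing the residual dependence on $\varepsilon$. Since an $\varepsilon$-net with $\varepsilon > D$ is trivial, it suffices to control the ratio for $\varepsilon \in (0,D]$; as $V_k$ is an explicit monotone function, $\sup_{0 < \varepsilon \leq D} V_k(9\varepsilon/2)/V_k(\varepsilon/2)$ is finite and depends only on $n$, $k$ and $D$, and I would take $n_2(n,k,D)$ to be this supremum rounded up to an integer. I expect the bookkeeping around this supremum to be the only genuinely delicate point: for $k>0$ one must cap $V_k(r)$ at the full model-sphere volume once $r$ exceeds $\pi/\sqrt{k}$ (consistent with the Bonnet--Myers restriction $D \leq \pi/\sqrt{k}$), whereas for $k\le 0$ the ratio grows with $\varepsilon$ so the worst case sits near $\varepsilon=D$. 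The remainder is the textbook disjoint-ball packing estimate.
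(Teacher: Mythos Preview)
Your argument is correct and is the standard Bishop--Gromov packing estimate. Note, however, that the paper does not supply its own proof of this lemma: it is quoted verbatim as \cite{GP}, Lemma~3.3, and used as a black box in Section~\ref{section:manif->ntwks}. Your proof is exactly the classical one underlying the cited result, so there is nothing further to compare.
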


\begin{lem}[\cite{GP}, Lemma 3.4] \label{lem:GP3}
	Let $M_1^n, M_2^n$, be manifolds having the same bounds $k =k_1 = k_2$
	and $D = D_1  = D_2$ (see above) and let $\{p_1,\ldots,p_{n_0}\}$ and
	$\{q_1,\ldots,q_{n_0}\}$ be minimal $\varepsilon$-nets with the same
	intersection pattern, on $M_1^n$, $M_2^n$, respectively. Then
	there exists a constant $n_3 = n_3(n,k,D,C)$, such that if
	$d(p_i,p_j) < C\cdot\varepsilon$, then $d(q_i,q_j) <
	n_3\cdot\varepsilon$. 
\end{lem}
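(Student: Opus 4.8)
The plan is to reduce the statement to a comparison between the metric distance on each manifold and the combinatorial (path) distance in the common abstract graph determined by the shared intersection pattern. Write $G$ for this abstract graph and $d_G$ for its path distance: since the intersection pattern records exactly which pairs of balls $\beta^n(p_k,\varepsilon)$ meet, the nets on $M_1^n$ and $M_2^n$ produce the same $G$, the correspondence $p_i \leftrightarrow q_i$ is a graph isomorphism, and $d_G(i,j)$ is well defined independently of which manifold we read it off. The two easy halves are: (a) adjacency $i \sim j$ in $G$ means $\beta^n(p_i,\varepsilon)\cap\beta^n(p_j,\varepsilon)\neq\emptyset$, hence $d(p_i,p_j)<2\varepsilon$ (and likewise for the $q$'s), so any combinatorial path of length $m$ forces metric distance below $2m\varepsilon$ by the triangle inequality. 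Thus it suffices to prove (b): if $d(p_i,p_j)<C\varepsilon$ on $M_1^n$ then $d_G(i,j)\le N_0$ for a constant $N_0=N_0(n,k,D,C)$. Granting (b), reading the same bound on $M_2^n$ through (a) gives $d(q_i,q_j)<2N_0\varepsilon$, and we set $n_3=2N_0$.

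For (b) I would build an explicit short combinatorial path along a minimizing geodesic $\gamma$ from $p_i$ to $p_j$, of length $L=d(p_i,p_j)<C\varepsilon$. Because the net is $\varepsilon$-dense, assign to each point of $\gamma$ a nearest net centre; this assignment is piecewise constant and changes only finitely often, and at each transition point $y^{\ast}$ the two successive centres $p_a,p_b$ both lie within $\varepsilon$ of $y^{\ast}$, whence $d(p_a,p_b)<2\varepsilon$ and $p_a\sim p_b$ in $G$. The successive centres encountered therefore form a genuine combinatorial path from $p_i$ to $p_j$, so $d_G(i,j)$ is at most the number of distinct centres met.

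It remains to bound that number, which is the crux. Every such centre lies within $\varepsilon$ of $\gamma$, hence inside $\beta^n(p_i,(C+1)\varepsilon)$; and by the efficient-packing condition the balls $\beta^n(p_k,\varepsilon/2)$ are disjoint, so the centres are $\varepsilon$-separated and carry disjoint $\varepsilon/4$-balls contained in $\beta^n(p_i,(C+2)\varepsilon)$. A Bishop--Gromov volume comparison, valid since ${\rm Ric}_M\ge(n-1)k$, bounds the ratio of the volume of the large ball to that of each small ball by the model-space ratio $V_k\big((C+2)\varepsilon\big)/V_k(\varepsilon/4)$; by the monotonicity of $r\mapsto {\rm vol}\,\beta^n(x,r)/V_k(r)$ and the fact that $\varepsilon\le {\rm diam}\,M^n\le D$ makes this uniform even when $k<0$, the ratio is controlled by a quantity depending only on $n,k,C$ and $D$. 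This yields $N_0=N_0(n,k,D,C)$ and completes (b).

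The main obstacle is exactly this last step. The covering radius $\varepsilon$ equals the ball radius while adjacency needs distance $<2\varepsilon$, so a naive chaining of nearest centres along $\gamma$ need not produce genuine edges; the nearest-centre (Voronoi) walk is precisely what repairs this, since at a switch both centres are simultaneously within $\varepsilon$ of the crossing point. The uniform count of centres then rests on the volume-comparison input rather than on any purely combinatorial estimate, which is why the diameter bound $D$ must enter the constant (it forces the Bishop--Gromov ratio to stay uniform as $\varepsilon$ ranges up to the diameter in the negatively curved case). The remaining steps are routine triangle-inequality bookkeeping.
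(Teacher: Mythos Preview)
The paper does not give its own proof of this lemma; it is quoted as a background result from Grove--Petersen (\cite{GP}, Lemma~3.4) and used as input for the discretization results in Section~\ref{section:manif->ntwks}. There is therefore nothing in the paper to compare against directly.

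That said, your argument is correct and is essentially the standard one (and, as far as one can tell, close to what Grove--Petersen do): reduce to the combinatorial distance in the common intersection graph $G$, then use the covering/packing structure together with Bishop--Gromov to bound the number of net centres that a minimizing geodesic of length $<C\varepsilon$ can see. The transfer to $M_2^n$ via the trivial bound $d(q_a,q_b)<2\varepsilon$ along each $G$-edge is exactly right.

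One step deserves a word of care. You assert that the nearest-centre assignment along $\gamma$ ``is piecewise constant and changes only finitely often''; in a general Riemannian manifold this is not automatic, since Voronoi boundaries need not meet a geodesic in finitely many points. The remedy is routine and does not change your constants: for each $t\in[0,L]$ choose a centre $p_{a(t)}$ with $d(\gamma(t),p_{a(t)})<\varepsilon$ and an open interval $I_t\ni t$ on which this inequality persists; extract a finite subcover $I_{t_1},\dots,I_{t_m}$ of $[0,L]$ and note that on each overlap $I_{t_l}\cap I_{t_{l+1}}$ the corresponding centres share a point of their $\varepsilon$-balls, hence are adjacent in $G$. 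This gives a finite walk from $i$ to $j$ through centres lying in $\beta^n(p_i,(C+1)\varepsilon)$, and shortening to a simple path bounds $d_G(i,j)$ by the packing count $N_0$, exactly as you conclude. A second, purely cosmetic, point: when you invoke Bishop--Gromov to bound the number of disjoint $\varepsilon/4$-balls in $\beta^n(p_i,(C+2)\varepsilon)$, the small balls are centred at the $p_a$'s rather than at $p_i$, so one should pass through $\beta^n(p_i,(C+2)\varepsilon)\subset\beta^n(p_a,(2C+3)\varepsilon)$ before applying the relative comparison; this changes only the explicit form of $N_0(n,k,D,C)$.
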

%

In other words, by using curvature-based sampling, the resulting $\varepsilon$-nets (discretizations) have a number of properties, listed below in the same order as the implying lemmas, that are essential in the sampling task:

\begin{enumerate}
	\item There is an upper bound on the number of sampling points.
	
	\item The vertex degree is bounded from above.
	
	\item The distances between sampling points are bounded from above, that is to say the lengths of the edges of the discretization are. Here the distance is the intrinsic distance on the sampled manifold. However, in practice on can (and commonly does, in Graphics and Imaging applications) replace the intrinsic distances with the Euclidean ones in a piecewise-flat approximation obtained via an isometric embedding in a higher dimensional Euclidean space. (See \cite{SAZ} and the references therein, as well as \cite{SAZ1} for concrete bounds for the relation between the intrinsic and approximated distances.) 
\end{enumerate}

\begin{rem}
	The lemmas above also hold in the more general case of metric measure spaces with a lower bound on the generalized Ricci curvature ${\rm CD}(K,N)$ \cite{Sa11-1}. While we do not cite the precise  statements here, in order to avoid unnecessary technical complications, the reader should recall that these, as well all the results below also hold for metric measure spaces with generalized Ricci curvature bounded from below. 
\end{rem} 

Let us examine the implications to the networks' sampling setting of each of the properties above:

The implications of Property (1) are the clearest from the Signal and Image Processing viewpoint: Given a prescribed precision of approximation ($\varepsilon$), there there exists an upper bound on the number of required sampling points (that depends on dimension, curvature bound and diameter). 

Property (2) is the one that allows for the basic connection with the Forman-Ricci curvature of the $\varepsilon$-separated net. Indeed, since for networks endowed with combinatorial weights (that is to say equal to 1), the formula for the simple graph Forman-Ricci curvature has the following alluring form:
\begin{equation}
\mathbf{F}(e) = 4 - {\rm deg}(v_1)  - {\rm deg}(v_2); 
\end{equation}
where $v_1, v_2$ are the end nodes of the edge $e$, 
property (2) translates to the following

\begin{lem} \label{lem:Ricci->gF-combi}
	Let $G(\mathcal{N})$ be a $\varepsilon$-separated net of a bounded manifold $M$.
	Then the graph Forman-Ricci curvature of  $G(\mathcal{N})$ is bounded from below; more precisely 
	\begin{equation}
     \mathbf{F}(e) \geq K_1; {\rm for\; any}\; e\;  {\rm edge\; of}\; G(\mathcal{N})\,;
	\end{equation}
	where $K_1 = 1 - 2n_2$ and $n_2 = n_2(n,k,D)$ is given by Lemma \ref{lem:GP2} above.
\end{lem}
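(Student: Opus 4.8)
The plan is to reduce the statement to the uniform degree bound furnished by Lemma \ref{lem:GP2}, and then simply to substitute that bound into the elementary combinatorial expression for the graph Forman-Ricci curvature. Recall that when $G(\mathcal{N})$ is endowed with combinatorial weights (all weights $\equiv 1$), the curvature of an edge $e=(v_1,v_2)$ takes the form $\mathbf{F}(e) = 4 - \deg(v_1) - \deg(v_2)$. Thus bounding $\mathbf{F}(e)$ from below is literally the same problem as bounding $\deg(v_1)$ and $\deg(v_2)$ from above, which is precisely the content of Property (2) of the efficient packings.

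First I would make explicit the identification between the combinatorial degree in $G(\mathcal{N})$ and a ball-intersection count on $M^n$. By the nerve construction recalled after Definition \ref{def:epsilon-nets}, two net points $p_k,p_l$ are joined by an edge exactly when $\beta^n(p_k,\varepsilon)\cap\beta^n(p_l,\varepsilon)\neq\emptyset$. Hence for a vertex $v=p_k$ its neighbours are precisely those $p_l$ with $l\neq k$ whose $\varepsilon$-ball meets $\beta^n(p_k,\varepsilon)$, so that $\deg(v)$ equals the cardinality of $\{\, l \,|\, l \neq k,\ \beta^n(p_k,\varepsilon)\cap\beta^n(p_l,\varepsilon)\neq\emptyset \,\}$.

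Next I would apply Lemma \ref{lem:GP2} with the choice $x=p_k$. That lemma bounds the full intersection count $|\{ j \,|\, \beta^n(p_k,\varepsilon)\cap\beta^n(p_j,\varepsilon)\neq\emptyset \}|$ by $n_2=n_2(n,k,D)$; since this count also contains the index $j=k$ (for which the intersection is the whole ball $\beta^n(p_k,\varepsilon)$), discarding that single self-term gives $\deg(v)\leq n_2-1$ uniformly over all vertices of $G(\mathcal{N})$. Substituting into the curvature formula then yields $\mathbf{F}(e) = 4 - \deg(v_1) - \deg(v_2) \geq 4 - 2(n_2-1) = 6 - 2n_2$ for every edge $e$, whence a fortiori $\mathbf{F}(e)\geq 1 - 2n_2 = K_1$. (Even the cruder bound $\deg(v)\leq n_2$, which ignores the self-term, already gives $\mathbf{F}(e)\geq 4-2n_2\geq K_1$; either way the constant depends only on $n,k,D$.)

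There is no serious analytic obstacle here: all the geometric substance — that the $\varepsilon$-balls of a minimal net in a manifold with ${\rm Ric}_M \geq (n-1)k$ and bounded diameter have uniformly bounded overlap — is already packaged inside Lemma \ref{lem:GP2}. The only step demanding care is the bookkeeping, namely translating the nerve's edge-incidence relation into the intersection count and remembering to remove the self-intersection term, since this is exactly what fixes the additive constant in $K_1$. I would close by noting that the same argument goes through verbatim for $\varepsilon$-nets of non-compact manifolds, and, via the metric-measure analogue of Lemma \ref{lem:GP2}, for spaces obeying a ${\rm CD}(K,N)$ lower curvature bound.
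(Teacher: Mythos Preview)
Your proposal is correct and follows essentially the same route as the paper: both reduce the claim to the combinatorial identity $\mathbf{F}(e)=4-\deg(v_1)-\deg(v_2)$ and then invoke the uniform degree bound supplied by Lemma~\ref{lem:GP2}. You are simply more careful with the bookkeeping (removing the self-intersection term) and thereby obtain the sharper constant $4-2n_2$ (or $6-2n_2$), which of course implies the paper's stated bound $K_1=1-2n_2$.
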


In fact, the result above extends, in view to a result of Kanai \cite{Kanai}, to all discretizations of bounded curvature of a given manifold $M$. More precisely, we have that

\begin{thm}[\cite{Kanai}, Lemma 2.5]
Let $M^n$ be a complete Riemannian manifold, such that ${\rm Ric}_M \geq (n-1)k$, and let $G(\mathcal{N})$ be a discretization of $M^n$. Then $(M^n,d)$ and $(G,\mathbf{d})$,  where $d$ is the Riemannian metric and $\mathbf{d}$ is the combinatorial metric, are roughly isometric.
\end{thm}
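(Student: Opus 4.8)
The statement is a classical result of Kanai, so the plan is to reconstruct its proof by exhibiting an explicit rough isometry in the sense of the definition of rough isometry above. The natural candidate is the inclusion $\varphi \colon (G(\mathcal{N}),\mathbf{d}) \to (M^n,d)$ that regards each vertex $p_k \in \mathcal{N}$ as the point $p_k \in M^n$. All the substance lies in a two-sided comparison between the combinatorial distance $\mathbf{d}$ on the net and the intrinsic distance $d$, after which the bi-Lipschitz-up-to-additive-constant estimate and the fullness condition will follow by adjusting the constants.

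First I would dispatch the easy inequality, which uses no curvature hypothesis. By construction an edge joins two net points $p_k,p_l$ exactly when $\beta^n(p_k,\varepsilon)\cap\beta^n(p_l,\varepsilon)\neq\emptyset$, so $d(p_k,p_l)<2\varepsilon$. Summing the triangle inequality along a shortest combinatorial path therefore gives $d(p_i,p_j)\leq 2\varepsilon\,\mathbf{d}(p_i,p_j)$ for every pair of vertices, i.e. $\mathbf{d}(p_i,p_j)\geq \tfrac{1}{2\varepsilon}\,d(p_i,p_j)$. In the same breath, condition (1) of Definition~\ref{def:epsilon-nets} (the covering property) shows that every $x\in M^n$ lies within distance $\varepsilon$ of some $p_k$, so $\varphi$ is $\varepsilon$-full.

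The substantive direction is the reverse inequality $\mathbf{d}(p_i,p_j)\leq C_1\,d(p_i,p_j)+C_2$, and this is where completeness and the lower bound ${\rm Ric}_M\geq (n-1)k$ enter. Since $M^n$ is complete, Hopf--Rinow supplies a minimizing geodesic $\gamma$ of length $L=d(p_i,p_j)$; I would subdivide $\gamma$ into $\lceil L/\varepsilon\rceil$ arcs of length $\leq\varepsilon$ with breakpoints $x_0=p_i,\dots,x_m=p_j$, and replace each $x_s$ by a nearest net point $q_s$ (so $d(x_s,q_s)\leq\varepsilon$), producing consecutive net points with $d(q_s,q_{s+1})\leq 3\varepsilon$. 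The key point is that any two net points within distance $3\varepsilon$ are joined by a combinatorial path of length bounded by a constant depending only on $n,k$: this is precisely where Bishop--Gromov volume comparison is used, as it bounds the number of $\tfrac{\varepsilon}{2}$-separated points inside a ball of radius $3\varepsilon$ (the same mechanism that produces the degree bound $n_2$ of Lemma~\ref{lem:GP2}), forcing such nearby vertices into a single cluster of uniformly bounded combinatorial diameter; the same volume argument also gives connectivity of $G(\mathcal{N})$. Concatenating these bounded clusters across the $m\sim L/\varepsilon$ breakpoints yields $\mathbf{d}(p_i,p_j)\leq \tfrac{C}{\varepsilon}\,d(p_i,p_j)+C$.

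Combining the two estimates gives $\tfrac{1}{2\varepsilon}d\leq\mathbf{d}\leq \tfrac{C}{\varepsilon}d+C$, which is exactly the inequality required by the definition of rough isometry, with $a=\max(2\varepsilon,C/\varepsilon)$ and a suitable additive constant $b$ (for $\varepsilon$ small one has $a=C/\varepsilon$, so $\tfrac{1}{a}=\varepsilon\leq\tfrac{1}{2\varepsilon}$, and both bounds hold simultaneously). Together with $\varepsilon$-fullness this shows $\varphi$ is a rough isometry. I expect the main obstacle to be the hard-direction cluster argument, namely making rigorous and uniform the claim that net points at distance $\leq 3\varepsilon$ are joined by combinatorially short paths. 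This rests entirely on the local finiteness and connectivity furnished by the lower Ricci bound through Bishop--Gromov comparison, and it is the step that genuinely separates manifolds of bounded geometry from arbitrary metric spaces, where no such rough isometry to a net need exist.
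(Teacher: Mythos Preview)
The paper does not supply its own proof of this statement: it is quoted verbatim as Kanai's Lemma 2.5 and used as a black box. Your reconstruction is exactly Kanai's original argument (inclusion of the net, easy Lipschitz direction and $\varepsilon$-fullness from the covering, hard direction by subdividing a minimizing geodesic and projecting breakpoints to nearby net points, with Bishop--Gromov supplying the packing bounds), so there is nothing in the paper to contrast it with; you are filling in precisely what the paper outsources.

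The one place where your sketch is genuinely incomplete is the cluster step. You assert that two net points at distance $\leq 3\varepsilon$ lie in a ``cluster of uniformly bounded combinatorial diameter'' because Bishop--Gromov bounds the \emph{number} of net points in a $3\varepsilon$-ball. Cardinality alone does not bound combinatorial diameter: you also need that the induced subgraph on those nearby net points is \emph{connected}, and ``the same volume argument also gives connectivity'' is not accurate --- connectivity is a topological fact, not a packing fact. The clean fix is to observe that the open balls $\beta(r,\varepsilon)$ meeting the short geodesic from $q_s$ to $q_{s+1}$ form an open cover of a connected set, hence their nerve (a subgraph of $G$) is connected; Bishop--Gromov then bounds the number of such $r$ by some $N=N(n,k,\varepsilon)$, so the nerve has diameter $\leq N-1$. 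With this patch your argument goes through. (Two minor cosmetic points: the cluster bound depends on $\varepsilon$ as well as $n,k$, since for $k<0$ the Bishop--Gromov ratio at scale $\varepsilon$ involves $\sqrt{-k}\,\varepsilon$; and note that Kanai's own convention takes the edge threshold at $3\varepsilon$ rather than the paper's $2\varepsilon$, which is exactly why the extra cluster step is needed here but not in \cite{Kanai}.)
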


Therefore, it follows that we can also state 

\begin{lem}  \label{lem:Ricci->gF-combi1}
	Let $M^n$ be a complete Riemannian manifold, such that ${\rm Ric}_M \geq (n-1)k$, and let $G(\mathcal{N})$ be a discretization of $M^n$, endowed with the combinatorial metric. Then the graph Forman-Ricci curvature of  $G(\mathcal{N})$ is bounded from below; more precisely 
	\begin{equation}
	\mathbf{F}(e) \geq K_2; {\rm for\; any}\; e\;  {\rm edge\; of}\; G(\mathcal{N})\,;
	\end{equation}
	where $K_2 = 1 - 2k_1$, and $k_1$ is the upper bound on the degrees of the vertices of $G(\mathcal{N})$. 
	
\end{lem}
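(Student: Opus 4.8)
The plan is to combine the bounded-geometry property of Kanai-type discretizations with the explicit combinatorial expression for the graph Forman--Ricci curvature, so that the statement reduces to a one-line estimate once the vertex degrees have been controlled.

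First I would make precise why the degree bound $k_1$ exists and is uniform in the non-compact setting. Because ${\rm Ric}_M \geq (n-1)k$, the Bishop--Gromov volume comparison theorem controls the number of $\varepsilon$-separated points contained in any metric ball of a fixed radius, with the control depending only on $n$, $k$ and $\varepsilon$ and \emph{not} on the center of the ball. By the construction of $G(\mathcal{N})$, two net points are joined by an edge exactly when their $\varepsilon$-balls overlap -- equivalently, when they lie within distance $2\varepsilon$ -- so every neighbour of a vertex $p$ lies in $\beta^n(p,2\varepsilon)$; moreover, since the centers carry pairwise-disjoint $\varepsilon/2$-balls, the neighbours of $p$ form an $\varepsilon$-separated set whose disjoint $\varepsilon/2$-balls are contained in $\beta^n(p,5\varepsilon/2)$. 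Volume comparison then bounds their number by a constant $k_1 = k_1(n,k,\varepsilon)$ that is uniform over all vertices. This is precisely the bounded-geometry content underlying the rough isometry theorem of Kanai cited above, and it is the non-compact analogue of Lemma~\ref{lem:GP2}; alternatively, one may simply invoke that theorem and take the uniform degree bound $k_1$ as given.

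Second, with the uniform bound ${\rm deg}(v) \leq k_1$ in hand for every vertex, I would use the fact that, for the combinatorial metric (all node and edge weights equal to $1$), the graph Forman--Ricci curvature reduces to
\[
\mathbf{F}(e) = 4 - {\rm deg}(v_1) - {\rm deg}(v_2)\,.
\]
For an arbitrary edge $e = (v_1,v_2)$ both endpoint degrees are at most $k_1$, whence $\mathbf{F}(e) \geq 4 - 2k_1$. Since $4 - 2k_1 \geq 1 - 2k_1 = K_2$, the asserted lower bound $\mathbf{F}(e) \geq K_2$ holds a fortiori for every edge of $G(\mathcal{N})$, which completes the argument.

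The main obstacle is entirely contained in the first step: transferring the degree bound from the compact regime to the complete, possibly unbounded, one. In the bounded case this uniformity is exactly Lemma~\ref{lem:GP2}, which relies on the diameter bound $D$ through Grove--Petersen; for a general complete $M^n$ that diameter bound is unavailable, so the estimate must be derived purely from the \emph{local} volume-comparison inequality, which needs only the lower Ricci bound and the separation scale $\varepsilon$. Making this locality explicit -- equivalently, citing Kanai's framework, where it is already packaged -- is the only nontrivial ingredient; once the degree is controlled, the curvature estimate is immediate from the combinatorial formula.
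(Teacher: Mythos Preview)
Your proposal is correct and follows essentially the same route as the paper: the paper simply invokes Kanai's theorem to obtain the uniform degree bound $k_1$ in the complete (non-compact) setting, and then reads off the curvature bound from the combinatorial formula $\mathbf{F}(e)=4-\deg(v_1)-\deg(v_2)$, exactly as you do. Your first step unpacks the Bishop--Gromov packing argument that underlies Kanai's bounded-geometry lemma rather than merely citing it, and you correctly observe that the formula actually yields the sharper estimate $\mathbf{F}(e)\geq 4-2k_1$, of which the stated $K_2=1-2k_1$ is a weakening.
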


\begin{rem}
	In view of the results in \cite{Sa11-1}, Lemmas  \ref{lem:Ricci->gF-combi} and \ref{lem:Ricci->gF-combi1} above both  readily adapt to metric measure spaces with generalized Ricci curvature bounded from below.
\end{rem}

Moreover, the lower bound on the Ricci curvature translates to a similar bound on the graph Forman-Ricci curvature of a discretization, even if this is not endowed with the discrete metric, but rather with the (arguably more natural) induced metric. (By this we mean that the length of an edge equals the distance on $M^n$ between its edge points.)

\begin{prop}
	Let $M^n$ be a Riemannian manifold with Ricci curvature bounded from below, i.e. ${\rm Ric}_M \geq (n-1)k$ and let $G(\mathcal{N})$ be a discretization with bounded geometry, having edge weights equal to the Riemannian distance between the end points. Then the graph Forman-Ricci curvature is also bounded.
\end{prop}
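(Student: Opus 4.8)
The plan is to reduce the claim to two structural facts about the discretization $G(\mathcal{N})$: that all edge weights are mutually comparable, and that vertex degrees are bounded. First I would rewrite the graph Forman-Ricci curvature of Formula (\ref{FormanRicciEdge}) so that the distinguished weight $w(e)$ disappears and only ratios of edge weights survive. Writing $E(v_i)$ for the edges incident to $v_i$ other than $e$ and distributing the outer factor $w(e)$ through (\ref{FormanRicciEdge}) gives
\[
\mathbf{F}(e) = w(v_1) + w(v_2) - w(v_1)\sum_{e' \in E(v_1)}\sqrt{\frac{w(e)}{w(e')}} - w(v_2)\sum_{e' \in E(v_2)}\sqrt{\frac{w(e)}{w(e')}}\,.
\]
As a consistency check, setting all weights equal to $1$ recovers the combinatorial expression $\mathbf{F}(e) = 4 - {\rm deg}(v_1) - {\rm deg}(v_2)$ used above. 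In this form it is transparent that $\mathbf{F}(e)$ is controlled as soon as each summand $\sqrt{w(e)/w(e')}$ and the number of summands are controlled.

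Next I would establish comparability of edge weights directly from the efficient-packing structure of Definition \ref{def:epsilon-nets}. On the one hand, an edge $e = (v_1,v_2)$ exists precisely when the balls $\beta^n(v_1,\varepsilon)$ and $\beta^n(v_2,\varepsilon)$ meet, whence the triangle inequality forces $w(e) = d(v_1,v_2) < 2\varepsilon$. On the other hand, the balls $\beta^n(v_1,\varepsilon/2)$ and $\beta^n(v_2,\varepsilon/2)$ are disjoint; since $M^n$ is complete, the midpoint of a minimizing geodesic joining $v_1$ to $v_2$ would otherwise lie in both, so $w(e) = d(v_1,v_2) \geq \varepsilon$. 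Thus every edge weight lies in $[\varepsilon,2\varepsilon)$, so every ratio $w(e)/w(e')$ between adjacent edges lies in $(1/2,2)$, giving $\sqrt{w(e)/w(e')} \in (1/\sqrt{2},\sqrt{2})$ uniformly.

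I would then invoke bounded geometry to close the estimate: there is $\rho_0 > 0$ with ${\rm deg}(v) \leq \rho_0$ for every vertex, so each of the two sums above has at most $\rho_0 - 1$ terms. Taking the node weights to be combinatorial, $w(v_1) = w(v_2) = 1$, the two sums together contribute at most $2(\rho_0 - 1)\sqrt{2}$ and at least $0$, so that
\[
2 - 2\sqrt{2}\,(\rho_0 - 1) \leq \mathbf{F}(e) \leq 2
\]
for every edge $e$, which is the asserted two-sided bound with constants depending only on $\rho_0$ (equivalently, via Lemma \ref{lem:GP2}, only on $n,k,D$). If one prefers genuine node weights, the same computation delivers bounds provided the node weights are bounded above and below by positive constants, the upper bound becoming $w(v_1)+w(v_2)$ and the lower bound scaling accordingly.

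The main obstacle is the two-sided comparability of edge lengths, and in particular the lower bound $d(v_1,v_2) \geq \varepsilon$: it is what prevents the ratios $w(e)/w(e')$ from degenerating, and it is precisely where the separation property (disjointness of the half-radius balls) together with completeness of $M^n$ enters. Once the edge weights are pinned to the fixed scale $[\varepsilon,2\varepsilon)$, the remainder is the elementary estimate above. A minor point to treat with care is that the proposition leaves the node weights unspecified; I would either adopt the combinatorial convention or impose two-sided positive bounds on them, noting that under the induced metric (edge length $=$ Riemannian distance) the argument is otherwise identical to, and slightly refines, Lemmas \ref{lem:Ricci->gF-combi} and \ref{lem:Ricci->gF-combi1}.
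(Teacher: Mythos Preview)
Your proof is correct and shares the paper's core reduction: both argue that $\mathbf{F}(e)$ is controlled once edge weights are two-sidedly bounded and vertex degrees are bounded. The difference lies in how the edge-weight bounds are obtained. You extract them directly from the defining properties of the $\varepsilon$-net (Definition~\ref{def:epsilon-nets}): ball intersection gives $w(e)<2\varepsilon$, and disjointness of the half-radius balls together with the midpoint argument gives $w(e)\geq\varepsilon$. The paper instead splits into cases, invoking Property~1 (Lemma~\ref{lem:GP1}) and Property~3 (Lemma~\ref{lem:GP3}) in the compact case and Kanai's rough-isometry theorem in the non-compact case, to conclude that constants $C_1\leq w(e)\leq C_2$ exist. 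Your route is more elementary and uniform (no compact/non-compact dichotomy), and it yields the explicit window $[\varepsilon,2\varepsilon)$ and hence the concrete bounds $2-2\sqrt{2}(\rho_0-1)\leq\mathbf{F}(e)\leq 2$; the paper's route ties the statement to the Grove--Petersen and Kanai machinery already set up in the section, which is useful if one wants the constants expressed in terms of $n,k,D$. On node weights the paper opts for $w(v)=\deg(v)$ while you take the combinatorial choice; both are bounded two-sidedly under the bounded-geometry hypothesis, and you correctly flag that any such bounded choice suffices. One small caveat: your midpoint argument uses the existence of a minimizing geodesic, hence implicitly completeness of $M^n$; this is also tacitly assumed in the paper (it is explicit in the non-compact case via Kanai's theorem), so it is worth stating.
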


\begin{proof}
	First of all, let us note that the node (vertex) weights also need to be prescribed. There are a number of natural possibilities: The combinatorial weight 1, the degree ${\rm deg}(v)$ and the {\it weighted degree} ${\rm deg}_w(v) = \frac{1}{{\rm deg}(v)}\sum_{e \sim v} w(e)$. The first one is less fitting in this geometric context, while the last one introduces the edge weights again, thus is repetitive in its modeling capacity. We settle, therefore, for the second option, i.e. $w(v) = {\rm deg}(v)$, even though, for the purpose of this proof, no specific choice is necessary.

	Recall that by Formula (\ref{FormanRicciEdge}), the graph Forman-Ricci curvature is 
	\[
	\mathbf{F}(e) = w(e) \left( \frac{w(v_1)}{w(e)} +  \frac{w(v_2)}{w(e)}  - \sum_{e(v_1)\ \sim\ e,\ e(v_2)\ \sim\ e} \left[\frac{w(v_1)}{\sqrt{w(e) w(e(v_1))}} + \frac{w(v_2)}{\sqrt{w(e) w(e(v_2))}} \right] \right)\,;
	\]
	Therefore, $\mathbf{F}(e)$ is bounded iff the edge weights are bounded away from zero (independently of the specific choice of node weights). For bounded manifolds this fact follows immediately from the finiteness Property 1. (In fact, Property 3 gives also an upper bounds for the weights.) 
	For non-compact manifolds with bounded curvature, it follows from Kanai's Theorem that there exists $C_1,C_2 > 0$ such that $C_1 \leq w(e) \leq C_2$, thus the desired property also holds in this case. 
	
\end{proof}

\begin{rem}
	Again, by applying the results in \cite{Sa11-1}, the proposition above extends to metric measure spaces with generalized Ricci curvature bounded from above as well. 
\end{rem}

However, Property 3 does not suffice to assure a similar connection between the given upper bound on the curvature of a Riemannian manifold $M$ and one on the full Forman curvature of a discretization $G(\mathcal{N})$ of $M$. 
Indeed, in computing the full Forman curvature, one has to take into account not just the edge weights (lengths), but also the weights of the 2-faces, which in this case should naturally be taken as the areas of the corresponding $PL$ (or, rather, piecewise flat approximation). (To this end, we can assume, of course, that $M$ is isometrically embedded in some $\mathbf{R}^N$, for $N$ sufficiently large.) Recall also that in the sampling process one produces, in fact, simplicial complexes (triangulations), thus 2-face areas are readily computable from the edge lengths. Unfortunately, isometry does not imply non-collapse of area (for some extreme and important examples, see \cite{BZ}). Therefore, in order to obtained the desired result, we have to ensure that such collapse does not occur. This requirement is fulfilled if the given triangulation induce by the $\varepsilon$-net is {\it thick} (or {\it fat}); where thickness is defined as follows

\begin{defn} Let $\tau \subset \mathbb{R}^n$ ; $0 \leq k \leq n$ be a $k$-dimensional simplex.
	The {\it thickness}  $\varphi$ of $\tau$ is defined as being:
	\begin{equation}
	\varphi = \varphi(\tau) = \hspace{-0.3cm}\inf_{\hspace{0.4cm}\sigma
		\leqslant \tau
		\raisebox{-0.25cm}{\hspace{-0.9cm}\mbox{\scriptsize${\rm dim}\,\sigma=j$}}}\!\!\frac{\rm Vol_j(\sigma)}{\rm diam^{j}\,\sigma}\;.
	\end{equation}
	The infimum is taken over all the faces of $\tau$, $\sigma \leqslant \tau$,
	and ${\rm Vol}_{j}(\sigma)$ and ${\rm diam}\,\sigma$ stand for the Euclidian
	$j$-volume and the diameter of $\sigma$ respectively. (If
	${\rm dim}\,\sigma = 0$, then ${\rm Vol}_{j}(\sigma) = 1$, by convention.)
	A simplex $\tau$ is $\varphi_0${\it-thick}, for some $\varphi_0 > 0$,
	if $\varphi(\tau) \geq \varphi_0$. A triangulation (of a submanifold
	of $\mathbb{R}^n$) $\mathcal{T} = \{ \sigma_i \}_{i\in \bf I}$ is
	$\varphi_0${\it-thick} if all its simplices are $\varphi_0$-thick. A
	triangulation $\mathcal{T} = \{ \sigma_i \}_{i\in \bf I }$ is {\it
		thick} if there exists $\varphi_0 > 0$ such that all its
	simplices are $\varphi_0${\it-thick}.
\end{defn}

Since any Riemannian manifold, satisfying only mild topological finiteness conditions (with or without boundary) admits a fat triangulation -- see e.g. \cite{Sa11-1} and the references therein, it follows that any $\varepsilon$-net can be improved to render a thick triangulation. 
Furthermore, during the thickening process the edge lengths (weights) are modified only slightly, given that one uses to this end only $\varepsilon$-moves (see \cite{mun}, \cite{cms} for technical details).
Given the facts above and recalling that, by Formula (\ref{eq:Forman-2d}), the full Forman-Ricci curvature is expressed by
\[
\hspace*{-2.65cm}{\rm Ric}_{\rm F} (e) = \omega (e) \left[ \left( \sum_{e \sim f} \frac{\omega(e)}{\omega (f)}+\sum_{v \sim e} \frac{\omega (v)}{\omega (e)}	\right) \right. 
\]
\[\hspace*{2.65cm}
- \left. \sum_{\hat{e} \parallel e} \left| \sum_{\hat{e},e \sim f} \frac{\sqrt{\omega (e) \cdot \omega (\hat{e})}}{\omega (f)} - \sum_{v 	\sim e, v \sim \hat{e}} \frac{\omega (v)}{\sqrt{\omega(e) \cdot \omega(\hat{e})}} \right| \right] \; ;
\]
 we have in fact proven the following theorem:

\begin{thm}
 Any  Riemannian manifold with Ricci curvature bounded from below admits a discretization with bounded full Forman-Ricci curvature.
\end{thm}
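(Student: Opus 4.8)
The plan is to show that every quantity entering the full Forman-Ricci curvature formula (\ref{eq:Forman-2d}) can be controlled uniformly, once the discretization is chosen appropriately. First I would fix the lower bound ${\rm Ric}_M \geq (n-1)k$ and, under the mild topological finiteness needed for triangulability, produce an $\varepsilon$-net $G(\mathcal{N})$ by the curvature-driven sampling of Section \ref{section: background}. As the text preceding the statement notes, we may assume $M^n$ is isometrically embedded in some $\mathbb{R}^N$, so that the $2$-faces are genuine Euclidean triangles, their areas furnish the natural face weights $w(f)$, and the edge weights $w(e)$ are the (induced) distances between endpoints. For the node weights I would take $w(v) = {\rm deg}(v)$, as in the preceding Proposition, though the argument is insensitive to this choice.

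Formula (\ref{eq:Forman-2d}) expresses ${\rm Ric}_{\rm F}(e)$ as a sum over the faces $f$ incident to $e$, the vertices $v$ incident to $e$, the parallel edges $\hat e$, and the faces and vertices jointly incident to $e$ and $\hat e$; each summand is a ratio built from $w(e), w(\hat e), w(f), w(v)$. The \emph{number} of summands is controlled by bounded geometry: by Lemma \ref{lem:GP2} the degree is at most $n_2 = n_2(n,k,D)$, and since every incident face and every parallel edge is determined by the local combinatorial neighbourhood of $e$, the number of terms in (\ref{eq:Forman-2d}) is bounded by a constant depending only on $n_2$. The vertex weights satisfy $w(v) = {\rm deg}(v) \leq n_2$. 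The edge weights are two-sided bounded: the upper bound comes from Property (3), and the lower bound $w(e) \geq C_1 > 0$ follows, exactly as in the preceding Proposition, from finiteness in the compact case and from Kanai's Theorem in the non-compact case.

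The genuine obstacle — and precisely why Property (3) alone does not suffice, as the text stresses — is control of the face weights $w(f)$. A length-preserving realization need not preserve areas, and near-degenerate triangles would force $w(f) \to 0$, blowing up the terms $w(e)/w(f)$ and $\sqrt{w(e)\,w(\hat e)}/w(f)$. To rule this out I would invoke the existence of a fat triangulation: the $\varepsilon$-net can be upgraded, via $\varepsilon$-moves that perturb edge lengths only slightly (so the two-sided bounds on $w(e)$ survive), to a $\varphi_0$-thick triangulation. Thickness then yields, for each $2$-face $f$ with underlying simplex $\sigma$, the lower bound ${\rm Vol}_2(\sigma) \geq \varphi_0 \cdot {\rm diam}^2\sigma \geq \varphi_0 C_1^2$, so $w(f)$ is bounded away from zero; the upper bound on $w(f)$ is immediate from the bounded diameter of each face.

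With every numerator bounded above, every denominator bounded away from zero, and the number of summands bounded, each of the finitely many terms in (\ref{eq:Forman-2d}) is bounded, hence so is ${\rm Ric}_{\rm F}(e)$, uniformly over all edges $e$ of $G(\mathcal{N})$. I expect the thickness step to be the crux: one must verify that the fattening can be carried out while preserving the two-sided edge-length bounds supplied by the sampling construction, which is exactly what the $\varepsilon$-move machinery guarantees.
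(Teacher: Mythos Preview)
Your proposal is correct and follows essentially the same route as the paper: produce an $\varepsilon$-net, observe that the only obstruction to bounding every term in (\ref{eq:Forman-2d}) is the possible collapse of face areas, and remove that obstruction by upgrading to a $\varphi_0$-thick triangulation via $\varepsilon$-moves that perturb edge lengths only slightly. You have in fact spelled out more than the paper does --- the explicit bound on the number of summands via Lemma \ref{lem:GP2} and the inequality ${\rm Vol}_2(\sigma) \geq \varphi_0\,{\rm diam}^2\sigma \geq \varphi_0 C_1^2$ --- but these are exactly the details the paper's argument leaves implicit.
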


\begin{rem}
This result can also be extended to manifolds satisfying a generalized Ricci curvature bound by applying \cite{Sa11-1}, Proposition 4.7.
\end{rem}

Before concluding this section, let us note that, in the process of simplex thickening,  a (finite) number of subdivisions is required. In consequence, the resulting triangulation and the discretization associated to it will have a larger number of vertices, with degrees that do not depend simply on dimension, diameter and curvature. Therefore, the need for a simplification of the resulting network might arise in its turn. To this end, a Forman-Ricci curvature (of either kind) sampling would be applicable. Since the degrees of the additional vertices are a function depending solely of the dimension of the subdivided simplex, it is therefore quite probable that, at least when considering the discrete metric and applying graph Forman-Ricci curvature, the remaining nodes (and edges) would mostly still be the ones of the initial triangulation. 


\section{Conclusion and Outlook}

We have introduced a number of curvature measures and derived flows, as well as developing a family of coarse embedding kernels derived from some of them or from related operators. Furthermore, we have illustrated the ideas above on a sample of small and medium size networks, as well as on test images. However, these can represent only a preamble to an in-depth 
examination of these new tools. Therefore, first and foremost, there is the need for systematic experiments with large scale networks. It should be noted that this construction of networks, from manifolds to triangulations, and their subsequent simplification via curvature-based sampling is of high relevance in Deep Learning \cite{L+}, \cite{L++}.

A number of specific further directions of study impose themselves:
\begin{itemize}
	\item While, as we have discussed in detail, we are driven by the edge and higher order correlations centered approach, the node based one is still relevant. Therefore, the exploration of the capabilities of the (scalar curvature)  Yamabe flow is a natural and interesting venue of research.
	
	\item Of particular interest is the exploration of flows in the study of the long-time evolution of (hyper-)networks \cite{WSJ2}. To this end, flows derived from all the types of curvature herein should be examined. 
	
	\item Experiments with curvature-driven triangulations of manifolds and their sampling, especially for such manifolds as arising in Deep Learning (see references mentioned above) should definitely represent a future goal.

	\item One should try other embedding techniques than the one used herein, for instance the so called  {\it local linear embedding} \cite{RS}.
	
	\item Besides the kernel embedding method embraced in the present paper, one should explore another, and perhaps better established approach, that is the one using the eigenvectors and eigenfunctions of the Laplacian for embedding and sampling, where the classical graph Laplacian is replaced by the Bochner and rough ones. 
	Furthermore, given that they exist for each and every dimension up to the maximal one, they suggest themselves as a potentially powerful method of studying hypernetworks/complexes at many scales.
	
	\item The stronger results connecting Forman curvatures of discretizations, namely the convergence of the said curvatures to their manifold counterparts, as well as of the associated Laplacians to the smooth ones, is a goal that definitely should be pursued. 
	
\end{itemize}


\section*{Acknowledgement}

We would like to thank the anonymous reviewers, as well Frank Morgan for their valuable critique and constructive  suggestions which greatly helped improve our paper.


\section*{Appendix 1: Coarse Spaces}

We bring below the formal definition of a coarse space and give some basic examples, which in part are also relevant to our work.
To this end we first need a couple of preliminary definitions:

\begin{defn}
Let $X$ be a set and let $E, E_1, E_2 \subseteq \mathcal{P}(X \times X)$.  
\begin{enumerate}
	\item $E^{-1} = \{(x',x)\,|\, (x,x') \in E\}$ is called the {\it inverse} of $E$. 
	
	\item $E_1 \circ E_2 = \{(x_1,x_2)\,|\, (x_1,x) \in E_1\,, (x,x_2) \in E_2\}$ is called the {\it product} of $E_1$ and $E_2$.
\end{enumerate}
\end{defn}

\begin{defn}
	Let $X$ be a set and let $\mathcal{E} \subseteq \mathcal{P}(X \times X)$, such that the diagonal $\Delta(X) \subset \mathcal{E}$ and such that $\mathcal{E}$ is closed under the formation of subsets, inverses, products and (finite) unions. 
	$\mathcal{E}$ is called a {\it coarse structure} on $X$, and $(X,\mathcal{E})$ is called a coarse space. Furthermore, 
the sets $E \in \mathcal{E}$ are called the {\it controlled sets} or the {\it entourages} for the coarse structure. 
\end{defn}

\begin{exmps} We bring below a number of important examples. 
	\begin{enumerate}
		\item $\mathcal{E} = \mathcal{P}(X \times X)$ is called the {\em maximal} coarse structure on $X$. 
		
		\item Let $\mathcal{E}  \subseteq \mathcal{P}(X \times X)$ such that it contains only a finite number of points $(p,q) \notin \Delta(X)$. Then $\mathcal{E}$ is a coarse structure on $X$ called the {\em discrete coarse structure} on $X$. 
		
		
		
		\item Let $(X,d)$ be a metric space and let $\mathcal{E}  = \{E\,|\,E \subseteq \mathcal{P}(X \times X)\}$ such that $\sup\{d(x_1,x_2)\,|\, (x_1,x_2) \in E \}$ is finite. Then $\mathcal{E}$ is a coarse structure on $X$ called the {\em bounded coarse structure} on $X$. 
	\end{enumerate}
\end{exmps}

For further properties and characterizations of coarse spaces see \cite{Roe}.


\section*{Appendix 2: The General Formulas for the Forman-Laplacian and Curvature Functions}

We bring below the general formulas for the Forman-Laplacian and curvature functions. For more background on $CW$ complexes we refer the reader to Forman's paper \cite{Fo} as well as to \cite{Hat}.

Let $M$ be a (positively) weighted {\it quasiconvex} {\it regular} $CW$ complex, let $\alpha = \alpha^p \in M$ a $p$-dimensional cell and let $w(\alpha)$ denote its weight. While general weights are possible, making the combinatorial Ricci curvature extremely versatile, it is suffices (cf. \cite{Fo}), Theorem 2.5 and Theorem 3.9) to restrict oneself only to so called {\it standard weights}:

\begin{defn}  \label{saucan-def:st-wgh}
	The set of weights $\{w_\alpha\}$ is called a {\em standard set of
		weights} iff there exist $w_1, w_2 > 0$ such that given a $p$-cell
	$\alpha^p$, the following holds:
	\[w(\alpha^p) = w_1\cdot w_2^p\,.\]
\end{defn}
\noindent (Note that the combinatorial weights $w_\alpha \equiv 1$ represent a
set of standard weights, with $w_1 = w_2 = 1$.)

Using standard weights, we obtain the following formula for the Forman-Laplacian
\begin{align}\label{eq:bochner}
\Box_p(\alpha_1^p,\alpha_2^p) =
\sum_{\substack{\beta^{p+1}>\alpha_1^p \\ \beta^{p+1}>\alpha_2^p}}\epsilon_{\alpha_1,\alpha_2,\beta}\frac{\sqrt{\omega(\alpha_1^p)\omega(\alpha_2^p)}}{\omega(\beta^{p+1})} +
\sum_{\substack{\gamma^{p-1}<\alpha_1^p \\ \gamma^{p-1}<\alpha_2^p}}\epsilon_{\alpha_1,\alpha_2,\gamma}\frac{\omega(\gamma^{p-1})}{\sqrt{\omega(\alpha_1^p)\omega(\alpha_2^p)}}\,,
\end{align}
where, for instance $\alpha < \beta$ denotes that $\alpha$ is a face of $\beta$, and  $\epsilon_{\alpha_1,\alpha_2,\beta},\epsilon_{\alpha_1,\alpha_2,\gamma} \in \{-1,+1\}$ represent the respective {\it incidence numbers} of the cells $\beta$ relative to $\alpha_i$, and $\alpha_i$ relative to $\gamma$, $i = 1,2$, respectively. (see \cite{Hat}).

Furthermore, we also obtain the formula for the curvature functions, namely
\begin{align}\label{eq:Forman}
\begin{aligned}
\mathcal{F}(\alpha^p) &= \omega(\alpha^p)\Big[\Big(\sum_{\beta^{p+1}>\alpha^p}\frac{\omega(\alpha^p)}{\omega(\beta^{p+1})}\;
+ \sum_{\gamma^{p-1}<\alpha^p}\frac{\omega(\gamma^{p-1})}{\omega(\alpha^p)}\Big) \\
&\quad -\sum_{\alpha_1^p\parallel \alpha^p, \alpha_1^p \neq \alpha^p}\Big|\sum_{\substack{\beta^{p+1}>\alpha_1^p \\ \beta^{p+1}>\alpha^p}}\frac{\sqrt{\omega(\alpha^p)\omega(\alpha_1^p)}}{\omega(\beta^{p+1})}\:
- \sum_{\substack{\gamma^{p-1}<\alpha_1^p \\ \gamma^{p-1}<\alpha^p}}\frac{\omega(\gamma^{p-1})}{\sqrt{\omega(\alpha^p)\omega(\alpha_1^p)}}\Big|\:\;\Big]\,;
\end{aligned}
\end{align}
\noindent where $\alpha < \beta$ denotes $\alpha$ being a face of $\beta$ and $\alpha_1 \parallel \alpha_2$ parallel faces $\alpha_1$ and $\alpha_2$, the notion
of parallelism being defined as follows:
\begin{defn}  \label{saucan-def:parallel}
	Let $\alpha_1 = \alpha_1^p$ and $\alpha_2 = \alpha_2^p$ be two
	p-cells. $\alpha_1$ and $\alpha_2$ are said to be  {\em parallel}
	($\alpha_1
	\parallel \alpha_2$) iff either:
	(i) there exists $\beta = \beta^{p+1}$, such that $\alpha_1, \alpha_2 <
	\beta$; or (ii) there exists $\gamma = \beta^{p-1}$, such that
	$\alpha_1, \alpha_2 > \gamma$ holds, but not both simultaneously. (For example, in Fig. 1, $e_1,e_2,e_3,e_4$ are all the edges parallel to $e_0$.)
\end{defn}


\section*{Appendix 3: Standard Metrics on Weighted Graphs}

We present here briefly the two metric on graphs taking into account both node and edge weights that we mentioned in the main part of the paper. 

\begin{itemize}
	\item The {\it degree path metric} was fist introduced in \cite{DK} and further expanded upon in \cite{Huang-thesis}. It is closely associated with the (combinatorial) Laplacian, as well as with the random walk on a graph, and as such is widely used in theoretical studies of the Geometry of graphs. 

\begin{defn}
	Let $(X,m,w)$ be a node and edge weighted graph, $m$ denoting the node weights, and $w$ the edge weights. Then the function $\rho: X\times X \rightarrow [0,\infty)$,
       \[
		\rho(x,y) = \inf_\pi\sum_{i=1}^n\left(\max\{d(x_{i-1}),d(x_i)\}\right)^{-1/2}\,;
		\]
	where the infimum is taken all paths $\pi = x=x_0x_1 \ldots x_n=y$, and where $d$ denotes the {\it weighted degree}
	   \[
		d(x) = \frac{1}{m(x)}\sum_{y \sim x}w(x,y)\,,
		\]
	represents a metric on $X$.
\end{defn}

Since, one the one hand, the larger the degree of a node $x$, the faster the random walk leaves it and, on the other hand, by the definition of $\rho$, the larger the degree of any of the vertices $x,y$, the smaller the distance $\rho(x,y)$ between them, it follows that the faster the jump along an edge, the shorter the edge is in w.r.t. the metric $\rho$.
(For more details, such as the analogy with the Riemannian manifolds case, see \cite{Ke} and the references contained  therein.)

\item As its name suggests, the definition of the resistance metric is motivated by electrical networks, and $1/\Omega(x,y)$ represents an abstraction of the notion of {\it electrical conductance} and, as such, measures the connectivity along the edge $e$ connecting the nodes $x$ and $y$.  More precisely, we have the following definition: 

\begin{defn} Let $G$ be as above. The {\it resistance metric} $\Omega$ on $G$ is defined as 
	\begin{equation}
	\Omega(x,y) = \frac{1}{\sum_{t \in V}f(t)r(t,y)}\,; ;
	\end{equation}
\end{defn}
where 
\[
r = \left\{ \begin{array}{ll}
     \mbox{$\frac{1}{w(e)}$}\,, & \mbox{$x \sim y$}\,; \\
     1 \,,                     & \mbox{$x \not \sim y$} 
     \end{array}
     \right. 
\]
and where $f:V \rightarrow [0,1]$ is the unique function such that (a) $f(x) = 1, f(y) = 0$; and (b) $\sum_{z \in V}(f(t) - f(z))r(t,z) = 0$, for any $t \neq x,y$.

The resistance metric represents the weighted average of all the paths $\pi$ of ends $x$ and $y$, and it is, therefore, best used when the number of such paths is important. 

It is however, essentially a vertex-weights induced metric, except for the role of the function $f$ which is defined on the set of vertices. The following probabilistic interpretation however allows us to view the resistance metric as being induce both by vertex and edge weights: 
\[
\Omega(x,y) = \frac{1}{d(x){\rm Pr} (x \rightarrow y)}\,;
\]
where $d$ denotes here the degree of $x$ and ${\rm Pr} (x \rightarrow y)$ is the probability of a random walk starting from $x$ to reach $y$ before it returns to $x$. (For more details see \cite{DD} and the references therein.) If one considers, as it is commonly done, that the jump probability to any of the neighbors of a given vertex is equal, than this probability is viewed as a weight on the said vertex, thus allowing us to consider $\Omega$ as a metric on a node and edge weighted graph. 

\end{itemize}


\end{document}